\newcounter{dummy} \numberwithin{dummy}{section}
\newtheorem{theorem}[dummy]{Theorem}
\newtheorem{corollary}[dummy]{Corollary}
\newtheorem{lemma}[dummy]{Lemma}
\newtheorem{definition}[dummy]{Definition}
\newtheorem{proposition}[dummy]{Proposition}
\theoremstyle{remark}
\newtheorem{remark}[dummy]{Remark}
\newtheorem{example}[dummy]{Example}
\newcommand{\N}{\ensuremath{\mathbb{N}}}
\newcommand{\R}{\ensuremath{\mathbb{R}}}
\newcommand{\scrF}{\mathscr{F}}
\newcommand{\scrG}{\mathscr{G}}
\newcommand{\scrO}{\mathscr{O}}
\newcommand{\scrU}{\mathscr{U}}
\DeclareMathOperator{\id}{id}
\newcommand{\ve}{\varepsilon}
\DeclareMathOperator{\supp}{supp}
\DeclareMathOperator{\rank}{rank}
\DeclareMathOperator{\spn}{span}
\DeclareMathOperator{\Diff}{Diff}
\DeclareMathOperator{\Gr}{Gr}
\DeclareMathOperator{\Diffpid}{Diff^{\partial , \id}}
\DeclareMathOperator{\VF}{Vec}
\DeclareMathOperator{\ev}{ev}
\DeclareMathOperator{\Evol}{Evol}
\DeclareMathOperator{\evol}{evol}
\DeclareMathOperator{\Lf}{\mathbf{L}}
\DeclareMathOperator{\Orb}{Orb}
\DeclareMathOperator{\intOP}{int}
\newcommand{\coloneq}{\colonequals}
\newcommand{\Frechet}{Fr\'{e}chet }
\newcommand{\str}{\mathrm{str}}
\numberwithin{equation}{section}
\title{Controllability and diffeomorphism groups on manifolds with boundary}
\author{Erlend Grong and Alexander Schmeding}
\date{}
\numberwithin{equation}{section}
\address{University of Bergen, Department of Mathematics, P.O.~Box 7803, 5020 Bergen, Norway}
\email{erlend.grong@uib.no}
\address{Institutt for matematiske fag, NTNU Trondheim, Alfred Getz’ vei 1 Gløshaugen, 7034 Trondheim, Norway}
\email{alexander.schmeding@ntnu.no}
\subjclass[2020]{58D05 (primary); 93B05, 53C17, 58D15}
\keywords{Diffeomorphism group, manifold with smooth boundary, controllability, infinite-dimensional Lie group, exponential map}
\thanks{The first author is supported by the grant GeoProCo from the Trond Mohn Foundation - Grant TMS2021STG02 (GeoProCo). The second author was supported by the Research Council of Norway through project 302831 ”Computational Dynamics and Stochastics on Manifolds” (CODYSMA)}
\begin{document}

\begin{abstract}
In this article we consider diffeomorphism groups of manifolds with smooth boundary. We show that the diffeomorphism groups of the manifold and its boundary fit into a short exact sequence which admits local sections. In other words, they form an infinite-dimensional fibre  bundle. Manifolds with boundary are of interest in numerical analysis and with a view towards applications in machine learning we establish controllability results for families of vector fields. This generalises older results due to Agrachev and Caponigro in the boundary-less case. Our results show in particular that the diffeomorphism group of a manifold with smooth boundary is generated by the image of the exponential map.
\end{abstract}

\maketitle

\tableofcontents

\section{Introduction}

It is well known that the diffeomorphism group of a finite-dimensional manifold is an infinite-dimensional Lie group, see e.g. \cite{MR583436,glo22diff,schmeding_2022}. These groups and their geometric structure are of interest in shape analysis \cite[Chapter 5]{schmeding_2022}, machine learning \cite{CaGaRaS23}, geometric hydrodynamics \cite{KaMaS23}, and control theory \cite{Agrachev,Control14}. In the present article we investigate the structure of diffeomorphism groups for manifolds with boundary and controllability on these manifolds. For a smooth compact manifold $M$ with smooth boundary $\partial M$ the restriction of diffeomorphisms to the boundary induces a short exact sequence of infinite-dimensional Lie groups
\begin{align}\label{eq:sequence}
\mathbf{1} \rightarrow  \Diff_0^{\partial, \id}(M)  \rightarrow \Diff_0(M) \rightarrow \Diff_0(\partial M) \rightarrow \mathbf{1}
\end{align}
Here, $\Diff_0(M)$  is the (unit component of the) diffeomorphism group of a compact manifold $M$ with smooth boundary $\partial M$, $\Diff_0(\partial M)$ is the unit component of the diffeomorphisms of the boundary, and $\Diff_0^{\partial, \id}(M)$ denotes the unit component of the diffeomorphisms of $M$ whose restriction to the boundary equals the identity. Note that in \eqref{eq:sequence}, and in all what follows, we work with the identity component of the respective diffeomorphism groups and suppress the $0$-subscript in the notation. See Section~\ref{sec:diffgp} for more information on this sequence. 

Our first main result which is established in Section~\ref{sec:MFBoundary} is the following fibre bundle structure for the sequence \eqref{eq:sequence}, which in particular establishes that the restriction map is surjective, whence the sequence is exact.

\begin{theorem}
Let $M$ be a smooth compact manifold with smooth boundary $\partial M$. Then the short exact sequence \eqref{eq:sequence} admits local sections in the sense that there exists a smooth section of the restriction map $\Diff(M) \rightarrow \Diff(\partial M)$ on some identity neighbourhood.
\end{theorem}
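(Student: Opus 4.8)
The plan is to produce the local section explicitly from a collar of the boundary. First I would fix a collar, that is, an open embedding $\kappa\colon\partial M\times[0,1)\hookrightarrow M$ with $\kappa(p,0)=p$, together with a smooth cut-off function $\chi\colon[0,1)\to[0,1]$ that equals $1$ near $0$ and vanishes near $1$. Fix also a local addition $\Sigma$ on the closed manifold $\partial M$ (for instance the Riemannian exponential map of some auxiliary metric), giving a chart $\varphi\colon\mathcal V\to\VF(\partial M)$ of $\Diff(\partial M)$ around $\id_{\partial M}$, so that every $\phi\in\mathcal V$ is recovered from the vector field $\varphi(\phi)$ via $\phi(p)=\Sigma(\varphi(\phi)(p))$. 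Shrinking $\mathcal V$ so that $\varphi(\mathcal V)$ is star-shaped about $0$, the scaled fields $\chi(t)\,\varphi(\phi)$ again lie in $\varphi(\mathcal V)$ for every $t\in[0,1)$, so $\varphi^{-1}(\chi(t)\varphi(\phi))$ is a well-defined diffeomorphism of $\partial M$.

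Given $\phi\in\mathcal V$ with $X\coloneq\varphi(\phi)$, I would then set $s(\phi)=\Phi_X$, where $\Phi_X=\id_M$ outside the collar and $\Phi_X(\kappa(p,t))=\kappa(\Sigma(\chi(t)X(p)),t)$ on it. Since $\chi$ vanishes near $t=1$, the two prescriptions agree (both equal $\id$) on an overlap, so $\Phi_X$ is a well-defined smooth self-map of $M$; for each fixed $t$ the slice map $p\mapsto\Sigma(\chi(t)X(p))$ is the diffeomorphism $\varphi^{-1}(\chi(t)X)$ of $\partial M$, hence $\Phi_X$ is a diffeomorphism of $\partial M\times[0,1)$ respecting the projection to $[0,1)$, with smooth inverse given fibrewise by the inverse diffeomorphisms, and therefore $\Phi_X\in\Diff(M)$. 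Along $\partial M=\kappa(\partial M\times\{0\})$ one has $\Phi_X|_{\partial M}=\varphi^{-1}(\chi(0)X)=\phi$, so the restriction map carries $s(\phi)$ back to $\phi$, and $s(\id_{\partial M})=\id_M$. Using the compactness of $M$ and shrinking $\mathcal V$ once more, one may further assume that every $\Phi_X$ lies inside a fixed chart domain of $\Diff(M)$ around $\id_M$.

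What remains, and what I expect to be the crux of the argument, is the smoothness of $s\colon\mathcal V\to\Diff(M)$. I would compose $s$ with a chart of $\Diff(M)$ around $\id_M$ built from a local addition on $M$ chosen compatibly with the collar --- on the collar, the product of $\Sigma$ with the standard addition on $[0,1)$, patched with an arbitrary local addition in the interior --- which has the additional effect that the resulting chart representatives are vector fields tangent to $\partial M$, so that they lie in the model space of $\Diff(M)$. By the exponential law for the canonical manifold structure on spaces of smooth maps, smoothness of the induced map $\mathcal V\to\VF(M)$ is equivalent to smoothness of the associated map $\mathcal V\times M\to TM$, and the latter is a finite composite of manifestly smooth operations: the chart $\varphi$, fibrewise multiplication by $\chi(t)$ on $T\partial M$, the smooth maps $\Sigma$ and $\kappa$, evaluation, and the local addition on $M$. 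Smoothness of $s$ therefore follows from the standard properties of these constructions on manifolds of mappings recorded in the references cited above, which proves the theorem. As a byproduct, left translation turns this identity-neighbourhood section into sections over a neighbourhood of every point, exhibiting \eqref{eq:sequence} as a locally trivial fibre bundle, but only the identity-neighbourhood section is needed for the statement itself.
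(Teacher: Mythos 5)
Your construction is correct, but it takes a genuinely different route from the paper. Both arguments start from the same raw ingredients --- a collar $\partial M\times[0,1)$, a cut-off $\chi$ equal to $1$ near the boundary, and a canonical chart $\varphi$ of $\Diff(\partial M)$ at the identity in which one scales linearly towards $0$ --- but they diverge in how the scaled data is turned into a diffeomorphism of $M$. The paper converts the straight line $t\mapsto t\varphi(\phi)$ into a time-dependent vector field via the right logarithmic derivative, pushes it to $\VF(M;\partial M)$ by a continuous linear extension operator $\theta$ (product extension on the collar, damped by $\chi$), and then applies the time-$1$ evolution; smoothness of the section is inherited from the regularity of $\Diff(M)$ together with continuity of $\theta$. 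You instead apply the rescaled diffeomorphism $\varphi^{-1}(\chi(t)\varphi(\phi))$ directly on each slice $\partial M\times\{t\}$, obtaining a fibred ``damped suspension'' of $\phi$ over the collar, glued to the identity in the interior. Your route is more elementary and explicit: it needs only the chart structure, the openness of $\Diff(\partial M)$ in $C^\infty(\partial M,\partial M)$ (to guarantee each slice map is a diffeomorphism after shrinking to a star-shaped chart image), and the exponential-law characterisation of smoothness, whereas the paper's route needs the full regularity machinery but fits the general Lie-theoretic pattern (continuous linear splitting at the Lie algebra level plus regularity yields local sections at the group level) and feeds directly into their later analysis of how far the section is from a homomorphism. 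Two small points you should make explicit: joint smoothness of the fibrewise inverse of $\Phi_X$ follows either from smoothness of inversion in $\Diff(\partial M)$ combined with the exponential law, or from the inverse function theorem applied to the block-triangular differential of $\Phi_X$ on the collar; and the detour through a collar-adapted local addition on $M$ is unnecessary, since smoothness of $s$ is equivalent to smoothness of $(\phi,m)\mapsto s(\phi)(m)$ by the canonical-manifold property of $\Diff(M)$ (Lemma~\ref{lem:diffgp}\,(i)), which your formula exhibits directly as a composite of smooth maps.
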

To our knowledge smooth local sections for the sequence \eqref{eq:sequence} have not previously been described in the literature.

The second main contribution of the article at hand is the controllability of diffeomorphism groups on manifolds with boundary. Our results generalise \cite{Agrachev} to the case of a manifold with boundary. If $X$ is a vector field on a compact manifold $M$ with boundary, then its flow $e^X$ is a well-defined diffeomorphism of $M$ when $X(x) \in T_x\partial M$ for any $x \in \partial M$. The notation $e^X$ alludes to the fact that the time $1$-flow of a vector field is the image of the vector field under the Lie group exponential of $\Diff (M)$, cf.\ Section \ref{sec:diffgp}. If $\scrF$ is a collection of vector fields, write
$$\Gr(\scrF) = \{ e^{Y_1} \circ \cdots  \circ e^{Y_k} \, : \, Y_1, \dots, Y_k \in \mathscr{F} \}$$
for the corresponding subgroup of generated by the vector fields $\scrF$.
\begin{theorem} \label{th:control} Let $(M,g)$ be a compact Riemannian manifold with boundary $\partial M$. Write its Levi-Civita connection as $\nabla$, and let $\nu$ be the outwards normal vector field.
Let $\scrF$ be a family of vector fields on $M$ tangent to $\partial M$ at the boundary, whose flows generate a group $\scrG = \Gr(\scrF)$ of diffeomorphisms. Define
\begin{equation} \label{hatF} \hat{\scrF}= \spn_{C^\infty(M)} \scrF = \left\{ \sum_{j=1}^m f_j Y_j \, : \, f_j \in C^\infty(M), Y_j \in \scrF, m \in \N \right\}\end{equation}
be the $C^\infty(M)$-module generated by $\scrF$, with corresponding group $\hat{\scrG} = \Gr(\hat{\scrF})$. Assume the following.
\begin{enumerate}[\rm (I)]
\item The group $\scrG$ acts transitively on both $\intOP M$ and any connected component of $\partial M$.
\item For each connected component of the boundary, there is a point $x \in \partial M$ in the component, a neighbourhood $V$ of $x$ and a vector field $Z \in \hat{\scrF}$ such that
$$Z|_{V \cap \partial M} =0 \qquad \text{ and } \qquad \langle \nabla_{\nu} Z, \nu \rangle(x) \neq 0.$$
\end{enumerate}
Then $\hat{\scrG}$ equals $\Diff(M)$.
\end{theorem}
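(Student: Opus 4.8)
We outline the strategy we would follow; the aim is to reduce the whole statement to the boundaryless controllability theorem of Agrachev and Caponigro \cite{Agrachev} together with the fibre bundle structure of the restriction map established above, so that the only genuinely new analytic work is concentrated near $\partial M$. Since every element of $\hat\scrG$ is a finite composition of time-one flows $e^{tY}$, $t\in[0,1]$, $Y\in\hat\scrF$, the group $\hat\scrG$ is connected and hence contained in $\Diff(M)$. Because the sequence \eqref{eq:sequence} is exact with kernel $\Diffpid(M)$ (by the fibre bundle theorem proved above), it suffices to establish (A) that the restriction map sends $\hat\scrG$ onto $\Diff(\partial M)$, and (B) that $\Diffpid(M)\subseteq\hat\scrG$. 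Granting these, for $\phi\in\Diff(M)$ choose by (A) some $\psi\in\hat\scrG$ with $\psi|_{\partial M}=\phi|_{\partial M}$; then $\phi\psi^{-1}\in\Diffpid(M)\subseteq\hat\scrG$ by (B), so $\phi\in\hat\scrG$.

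For (A) the plan is to restrict the whole system to $\partial M$. Every $Y\in\scrF$ is tangent to $\partial M$, so its flow restricts to the flow of $Y|_{\partial M}$, whence the restriction of $\hat\scrG$ is exactly $\Gr(\hat\scrF|_{\partial M})$; and since every smooth function on $\partial M$ extends to $M$, the family $\hat\scrF|_{\partial M}=\spn_{C^\infty(\partial M)}(\scrF|_{\partial M})$ is the $C^\infty(\partial M)$-module generated by $\scrF|_{\partial M}$. By hypothesis~(I) this module acts transitively on every connected component of the closed manifold $\partial M$, so the boundaryless theorem \cite{Agrachev}, applied one component at a time (using the module structure to produce vector fields supported on a single component), yields $\Gr(\hat\scrF|_{\partial M})=\Diff(\partial M)$. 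Note that (II) is not needed here.

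The heart of the proof is (B), and this is where (II) enters. The interior is handled as in the boundaryless theory: by (I) the group $\scrG$ is transitive on $\intOP M$, which lets one run the local version of the argument of \cite{Agrachev} on any small ball $B\Subset\intOP M$ and conclude, after a fragmentation, that $\hat\scrG$ contains every diffeomorphism compactly supported in $\intOP M$ and isotopic to the identity through such. The new point is the behaviour in a collar of $\partial M$. Fix a point $x\in\partial M$ at which (II) holds and pass to boundary normal coordinates $(y,r)$, $r\ge 0$, with $\partial M=\{r=0\}$ and $\nu=-\partial_r$. The condition $Z|_{V\cap\partial M}=0$ forces $Z=r\,W$ for a smooth vector field $W$ near $x$ (Hadamard's lemma), and, as one checks in these coordinates, $\langle\nabla_\nu Z,\nu\rangle(x)\neq 0$ forces $W$ to be transverse to $\partial M$ at $x$. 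On the other hand the argument for (A) shows that the Lie algebra $\mathfrak L$ generated by the $C^\infty(M)$-module $\hat\scrF$ restricts on $\partial M$ onto all of $\VF(\partial M)$; since $\mathfrak L$ is again a $C^\infty(M)$-module and, by (I), spans $T_yM$ at every interior point $y$, a direct linear-algebra computation in the coordinates $(y,r)$ --- writing an arbitrary vector field tangent to $\partial M$ as a combination of a local frame of $T\partial M$ chosen inside $\mathfrak L$ together with $Z$, and using $Z=rW$ with $W$ transverse to verify that the coefficients extend smoothly across $r=0$ --- gives $\mathfrak L=\VF_\partial(M)$ on a neighbourhood of $x$. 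In particular the genuine normal dilation $r\,\partial_r$ lies in $\mathfrak L$ near $x$; this is precisely the datum that (I) alone cannot supply, since any diffeomorphism assembled from flows of vector fields with $\langle\nabla_\nu\cdot,\nu\rangle\equiv 0$ on $\partial M$ has trivial normal $1$-jet along $\partial M$.

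From $\mathfrak L=\VF_\partial(M)$ near $x$ the plan is to deduce, by adapting the fragmentation-of-flows argument of \cite{Agrachev} to a half-space neighbourhood of a boundary point, that $\hat\scrG$ contains every diffeomorphism tangent to $\partial M$, supported near $x$, and isotopic to the identity through such. Conjugating by elements of $\scrG\subseteq\hat\scrG$ --- which by (I) carry $x$ to any point of its boundary component and send such local groups to the corresponding ones --- and using that the set of points satisfying (II) is dense in $\partial M$ and hence meets every component, spreads this over all of $\partial M$. Finally any $\phi\in\Diffpid(M)$ fragments, within $\Diffpid(M)$, into factors each supported either in an interior ball or in a neighbourhood of a boundary point, and each factor lies in $\hat\scrG$ by the above, which proves (B). The step I expect to be the main obstacle is this last one: the boundary version of the Agrachev--Caponigro machinery, where the inductive thickening of reachable flows must be performed in a half-space, made to preserve the boundary, and controlled for smoothness up to $r=0$ so that all intermediate maps are honest elements of $\Diff(M)$. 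By comparison the linear-algebra step promoting (II) to $r\,\partial_r\in\mathfrak L$, and the verification that transitivity of $\scrG$ rather than bracket-generation suffices to feed the boundaryless arguments, should be routine.
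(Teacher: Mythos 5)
Your overall decomposition through the exact sequence \eqref{eq:sequence} is not the route the paper takes, and as written it has two genuine gaps. The first is in step (B). You reduce to showing $\ker R\subseteq\hat\scrG$ and propose to fragment an arbitrary $\phi\in\Diffpid(M)$ \emph{within} $\Diffpid(M)$ into factors supported in interior balls or in small half-balls around boundary points. This fails: for the annulus $M=S^1\times[0,1]$ the Dehn twist $(\theta,t)\mapsto(\theta+2\pi t,t)$ lies in $\ker R\cap\Diff_0(M)$ but is not a product of boundary-fixing diffeomorphisms each supported in a contractible set (each such factor preserves the homotopy class rel endpoints of the arc $\{\mathrm{pt}\}\times[0,1]$, while the twist does not). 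The point is that $\ker R\cap\Diff_0(M)$ can be strictly larger than the identity component of $\Diffpid(M)$; the discrepancy is governed by $\pi_1(\Diff(\partial M))$ via the very bundle structure you invoke. The paper avoids this entirely: it fragments in $\Diff(M)$ along an isotopy $\varphi_s$ from $\id_M$ to $\phi$ (setting $\psi_j(x)=\varphi_{\lambda_1(x)+\cdots+\lambda_j(x)}(x)$), so the local factors only \emph{preserve} the boundary rather than fix it, and the local statement to prove (Lemma~\ref{lemma:nbh}) concerns identity neighborhoods in $C^\infty_{\str}(U,M)$, not in $\Diffpid$.

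The second gap is that your boundary-local step is not yet an argument. Passing from ``the Lie algebra generated by $\hat\scrF$ equals $\VF(M;\partial M)$ near $x$'' to ``$\hat\scrG$ contains every diffeomorphism supported near $x$'' is precisely the content of the theorem and cannot be cited from \cite{Agrachev}: flows of iterated brackets are not a priori elements of $\hat\scrG$, and the Agrachev--Caponigro machinery runs on pushforwards $\phi_*Y$ ($\phi\in\scrG$, $Y\in\scrF$) spanning the tangent space via the orbit theorem, not on the generated Lie algebra. You correctly isolate $Z=x_n\hat Z$ with $\hat Z$ transverse as the role of (II), but the step you defer as ``the main obstacle'' is where all the work lies. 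The paper's solution is to extend everything to a full neighborhood in $\R^n$ (Seeley extension), apply the factorization lemma (Lemma~\ref{lem:Xn}) in the \emph{ambient} space to $X_1,\dots,X_{n-1},\varphi\hat Z$ with $\varphi$ a carefully chosen cutoff, and only afterwards use that the composite preserves $\partial H^n$ to force the coefficient of $\varphi\hat Z$ to be divisible by $x_n$, so that the last factor is the flow of $g\varphi Z\in\hat\scrF$. The cutoff lemma (Lemma~\ref{lemma:varphi}) is essential here to exclude flows such as $e^{-2\pi x_2\partial_1+2\pi x_1\partial_2}=\id$, which preserve the boundary even though the coefficient does not vanish there; nothing in your sketch addresses this phenomenon. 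Your step (A) and the interior case are fine in outline, but without these two items the proof does not close.
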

\noindent Let us remark the following about this result.
\begin{enumerate}[$\bullet$]
\item Compared to the original results pertaining controllability on a manifold, cf. \cite{Agrachev}, we need to place the additional control (II) for the boundary term. We provide a counterexample to controllability in the absence of condition (II), see Section \ref{sect:counterex}.
\item At first glance, results of Theorem~\ref{th:control} seem geometric, as they involve the outwards normal vector fields and the Levi-Civita connection. However, as explained in Section~\ref{sec:NotGeometric}, a change in Riemannian will not change controllability properties, nor will a change in connection to any arbitrary affine connection. Also notice that stating $\langle \nabla_{\nu} Z, \nu \rangle(x) \neq 0$ is equivalent to assuming that $\nabla_{v} Z \neq 0 \bmod T_x\partial M$ whenever $v \in T_x M$ and $ v \neq 0 \bmod T_x \partial M$. Furthermore, $\nabla Z$ does not depend on the choice of conne
\end{enumerate}
There is a direct consequence of Theorem \ref{th:control} for the diffeomorphism group $\Diff (M)$. Recall first that the Lie group of diffeomorphisms of a (compact or non-compact) manifold is known to be not locally exponential, i.e. the exponential map is not surjective onto any neighbourhood of the unit, cf.\ \cite[Example 3.42]{schmeding_2022}. Nevertheless, the diffeomorphism group of a manifold without boundary is still generated by the image of the exponential map. This follows for example from a result by Thurston \cite{Thu74} which states that $\Diff_0 (M)$ is simple (see \cite{Ban97} or the alternative approach in \cite{HaRaT13}). The group theoretic argument does not work for the diffeomorphism group of a manifold with boundary which is \emph{not simple} (cf.\ \cite{Ryb98} where the weaker condition of perfectness for diffeomorphism groups of manifolds with boundary is discussed): The subgroup $\Diff_0^{\partial, \id}(M)$ of all diffeomorphisms restricting to the identity on the boundary is a non-trivial normal subgroup in $\Diff_0 (M)$. However, the statement is also a consequence of the controllability results in \cite{Agrachev}
which we generalise to manifolds with boundary. Thus if we take $\scrF$ to be the maximal possible set of vector fields at our given space, we obtain the following result.

\begin{corollary}\label{cor:exp_gen}
    Let $M$ be a compact manifold with smooth boundary $\partial M$. The following connected groups are generated by the image of their Lie algebras under the exponential map.
    \begin{enumerate}[\rm (a)]
    \item The identity component of the diffeomorphism group $\Diff(M)$.
    \item The identity component of the group $\Diffpid(M)$ of diffeomorphims that are the identity on the boundary.
    \end{enumerate}
    Their Lie algebras are respectively $\{ X \in \VF(M) \, : \, X(x) \in T\partial M \text{ for any $x\in M$}\}$ and $\{ X \in \VF(M) \, : \, X(x) =0 \text{ for any $x\in M$}\}$.
\end{corollary}

 We remark that despite the result contained in Corollary \ref{cor:exp_gen} being announced as early as \cite{Luk78}, to the best of our knowledge, no proof for this fact is available in the literature. In particular, \cite{Luk78} only announces the result as a consequence of the same methods as in the boundary less case without providing any details. Hence, the content of Corollary \ref{cor:exp_gen}, while being of interest for example in geometric hydrodynamics (see \cite[Section 8]{KaMaS23}), seems to be new for manifolds with boundary. 

The theory developed in this article provides a framework for numerical mathematics and deep learning algorithms on manifolds. The controllability results from \cite{Agrachev} were of use in machine learning algorithms for certain ordinary differential equations on manifolds, cf.\ \cite{EaZaOaP23}. Our results are a first step to establish a theoretic background for controllability in the study of neural networks on manifolds. In \cite{CaGaRaS23}, these networks have been considered on square domains in $\mathbb{R}^2$ motivated by problems from shape analysis. While Theorem \ref{th:control} is not sufficient to treat the example of a square domain (as it is a manifold with corners, but not with smooth boundary), it is a first step towards these applications.

Indeed, one can pose the question as to whether our results generalise in a sensible way to manifolds with corners \cite{MR583436,MRO92} or manifolds with even more general boundary (cf.\ e.g.\ \cite[Appendix A]{AaGaS20}).
It is not hard to see that the controllability results developed here admit a natural generalisation (at least for 2D-polytopes). The issue, both for controllability and sections for the diffeomorphism groups is more delicate for general manifolds with corners. We exclude these considerations from the present work which on purpose restricts to manifolds with smooth boundary. However, similar results as presented can be achieved for certain manifolds with corners. Details of this ongoing joint work with H.\ Gl\"{o}ckner will be provided elsewhere, \cite{GaGaS24}. 

Note also that our results yield information for sub-Riemannian geometry on diffeomorphism groups on manifolds with boundary, cf.\ e.g.\ \cite{AaT17,Arg20} where the case of manifolds without boundary is studied.

The structure of the paper is as follows. In order to build a solid foundation, we need to formulate the Lie group structure for diffeomorphisms of manifolds with a smooth boundary. One would think that such a structure would follow from Michor's result in \cite{MR583436}. However, as \cite{glo22diff} shows, the well known construction is incorrect, even though the statement that the diffeomorphisms form a Lie group remains true. The key problem is its construction, which does not generalise to manifolds with non-empty boundary, see \cite{glo22diff} for counter-examples. As pointed out in \cite{Mic20} the problem could be remedied by using an exponential of a suitable spray (see \cite[Section 5.9]{Mic20}). However, to our knowledge there is currently no reference in the literature which features a correct proof in full generality. As a consequence, we have to work around this problem. Exploiting that each manifold with smooth boundary embedds into its double we give an independent construction of the manifold structure on the diffeomorphism groups, see Section~\ref{sec:MFBoundary}. Then we establish the splitting of the sequence of diffeomorphism groups. In Section \ref{sect:controllability}, we then extend the controllability results to manifolds with boundary. Finally, Section~\ref{sec:Examples} contains several examples showcasing our results or the conditions needed.
\smallskip

\textbf{Acknowledgements} The authors thank H.\ Gl\"{o}ckner for helpful discussions and the idea leading to Proposition \ref{prop:ex:smooth-sect}.  
We would also like to thank K.--H.\ Neeb and P.W.\ Michor for helpful comments concerning this work. Finally, we thank the referees whose insightful comments led to several improvements of the article.

\section{Sequences of diffeomorphism groups on manifolds with boundary} \label{sec:MFBoundary}
For this section, $M$ will denote a smooth compact manifold with smooth boundary $\partial M$. For basic information on these objects and mappings between them, we refer to \cite{Lee13}.

\subsection{Preliminaries on manifolds with smooth boundary}
\label{subsect:prelimSMB}
For later use, we recall several facts on manifolds with smooth boundary: 
\begin{enumerate}[$\bullet$]
\item From \cite[Theorem 9.25]{Lee13}, it follows that $\partial M$ admits a \emph{collar neighbourhood}. By this collar we mean a smooth embedding $h \colon \partial M \times [0,1) \rightarrow M$ such that $h|_{\partial M \times \{0\}}^{\partial M} \colon \partial M \times \{0\} \rightarrow \partial M$ is a diffeomorphism.
\item Compact manifolds with or without boundary are paracompact. Thus every open cover of $M$ has a smooth partition of unity subordinate to itself \cite[Theorem 2.23]{Lee13}.
\item $M$ embeds as a compact submanifold with boundary into \emph{its double} $DM$, the latter being a compact manifold without boundary \cite[Example 9.32]{Lee13}.
\item Finally, using \cite[Theorem 9.34 and Corollary 9.17]{Lee13}), any vector field $X$ that is tangent to the boundary at boundary points can be integrated to a global flow $e^{tX}$ of diffeomorphisms of $M$. 
\end{enumerate}
In what follows, we write $\VF(TM)$ for the sections of the tangent bundle, i.e., vector fields in general on $M$, and $\VF(M;\partial M)$ for the vector fields on $M$ that satisfy $X(x) \in T_x \partial M$ for every $x \in \partial M$.

\subsection{Spaces and manifolds of smooth mappings} \label{mfd_withoutbdr}
For smooth manifolds $M,N$ we denote by $C^\infty (M,N)$ the set of smooth functions from $M$ to $N$. If $M$ is compact, we endow $C^\infty (M,N)$ with the compact open $C^\infty$-topology, see e.g. \cite{MR583436,schmeding_2022}. 

In the following proofs we need certain subsets of mappings to be open in the compact open $C^\infty$-topology. For this, let $M$ be a compact manifold and $N$ be an arbitrary manifold (both possibly with boundary if nothing else is said). Then the following sets are open in the compact open $C^\infty$-topology (cf.\ \cite[Chapter 2]{schmeding_2022} for the case of empty boundary and \cite[Section 2.1]{Hir76} for the general case):
\begin{enumerate}[$\bullet$]
\item The set of submersions $\text{Sub} (M,N)$ and the set of embeddings $\text{Emb}(M,N)$ in $C^\infty (M,N)$. 
\item If $M$ has no boundary, the diffeomorphism group $\Diff (M) \subseteq C^\infty (M,M)$ is open.
\item If $M$ has smooth boundary, the diffeomorphism group $\Diff(M)$ is open in $\{f \in C^\infty (M,M)\mid f(\partial M) \subseteq \partial M\}$
\end{enumerate}
This structure also turns the diffeomorphism group into a topological group. 
We shall from now on always assume that the source manifold $M$ is compact, possibly with smooth boundary. Further, we assume that $N$ is endowed with a local addition~$\Sigma$.

\begin{definition}
A \emph{local addition} is a smooth map $\Sigma\colon TN \supseteq O \to N$, where
\begin{enumerate}[\rm (i)]
\item $O$ is an open neighbourhood containing the zero section;
\item $\Sigma(0_x) = x$ for any $x \in M$;
\item If $\pi:TM \to M$ and $\tilde \Sigma = (\pi, \Sigma)$, then $\tilde \Sigma$ is a diffeomorphism from $O$ to its image in $M \times M$
\end{enumerate}
We remark that any finite dimensional manifold has a local addition given by the exponential map of any choice of Riemannian metric.
\end{definition}

For a manifold $M$ (possibly with boundary) we write
$$\VF(M)\coloneq C_{\id}^\infty(M,TM) =\{ X \in C^\infty (M,TM) \mid \pi \circ X = \id_M\}.$$
Topologise $\VF(M)$ as a subspace of $C^\infty(M,TM)$, by viewing $TM$ as a subset of the tangent bundle of the double $DM$ of $M$. We may argue, as in \cite[A.1 and A.3 (c)]{AaGaS20}, that the subspace topology of $\VF(M) \subseteq C^\infty (M,TM)$ turns the vector fields into a locally convex space. In the following we will then consider the closed subspace $\VF(M;\partial M) \subseteq \VF(M)$ of all vector fields tangent to the boundary.

\begin{remark}\label{rem:expolaw_sect}
Let $L$ be a smooth manifold possibly with boundary. Embedding $M$ in its double $D(M)$, we identify via the inclusion $TM = TD(M)|_M$, whence from \cite[Lemma A.2]{AaGaS20} we deduce that a mapping $g \colon L \rightarrow \VF(M)$ is smooth if and only if the associated mapping $g^\wedge \colon L \times M \rightarrow TM, g^\wedge (\ell,m)\coloneq g(\ell)(m)$ is smooth. 
Combining this observation with the inclusion $TM \rightarrow TD(M)$ we can identify $\VF(M;\partial M)$ with the closed subset $$\{F \in C^\infty (M,TD(M))\mid \forall x \in M , F(x)\in T_xM \text{ and } F(y)\in  T_y\partial M,\ \forall y \in \partial M\} $$
\end{remark}

\textbf{The manifold structure for $C^\infty (M,N)$ for $N$ without boundary} Assume that $N$ is a smooth manifold without boundary with local addition $\Sigma$, with $\tilde \Sigma = (\pi, \Sigma)$. We can then define \emph{the canonical charts} for $C^\infty(M,N)$, see \cite[Chapter~A.9]{AaGaS20}. By considering smooth mappings $$g \in C^\infty_f (M,TN) \coloneq \{ F \in C^\infty (M,TN) \mid \pi_N \circ F = f\}$$ such that $g(M)\subseteq O$, and $h \in C^\infty (M,N)$ such that $(f,h)(M)\subseteq \tilde \Sigma(O)$, we can define the mutually inverse mappings
\begin{equation}\label{can:charts}
\varphi_f (g) \coloneq \Sigma \circ g \text{ and } \varphi_f^{-1}(h)\coloneq {\tilde \Sigma}^{-1}(f,h), \qquad \text{ for all }f \in C^\infty (M,N).
\end{equation}
The map $\varphi_f$ is defined on an open subset of $C^\infty_f (M,TN)$, which is a closed vector subspace of $C^\infty (M,TN)$ whence a locally convex space. Similarly, the image of $\varphi_f$ is open in the compact-open $C^\infty$-topology. 
As \cite[Appendix A]{AaGaS20} shows, this atlas turns $C^\infty(M,N)$ into an infinite-dimensional manifold. 

\begin{remark}[$C^\infty (M,N)$ is a canonical manifold] If $N$ does not have a boundary \cite[Lemma A.10]{AaGaS20} shows that the manifold  of mappings is canonical in the sense of \cite[Section 2.3]{schmeding_2022}:

Any $\gamma \colon L \rightarrow C^\infty (M,N)$ is smooth if and only if $\gamma^\wedge \colon L \times M \rightarrow N, \gamma^\wedge (\ell,m)=\gamma(\ell)(m)$ is smooth. Denoting by $\partial_L\gamma^\wedge$ the partial derivative of a map with respect to the $L$-component, there is a natural identification $TC^\infty (M,N) \cong C^\infty (M,TN)$ via $(T\gamma)^\wedge = \partial_L \gamma^\wedge$.
\end{remark}

\subsection{Diffeomorphism groups of manifolds with boundary}\label{sec:diffgp}

In this section we construct the Lie group structure for the diffeomorphism group of the manifold $M$. The point will be that we follow an idea from \cite{EM70} to construct the manifold structure of $\Diff (M)$ in the case $M$ has smooth boundary. However, recall first that if $M$ has no boundary, then one can exploit that $\Diff (M) \subseteq C^\infty (M,M)$ is an open submanifold of the manifold constructed in the last section. It is well known that $\Diff(M)$ with this structure is a Lie group, \cite[Example 3.5]{schmeding_2022}. Unfortunately, we do not have a manifold structure available on $C^\infty (M,M)$ if $M$ has boundary, so we need to work around this. 

\textbf{Assume now that $M$ has non-empty smooth boundary $\partial M$.} We embedd $M$ in its double $D(M)$ and obtain a smooth manifold structure on $C^\infty (M,D(M))$ as in the previous section. Recall that elements in $\Diff (M)$ restrict to diffeomorphisms of the boundary $\partial M$. Further, by composing elements in $\Diff (M)$ with the inclusion $\iota \colon M \rightarrow D(M)$, we can identify $\Diff (M)$ as a subset of $C^\infty (M,D(M))$ and indeed as a subset of the open set $\text{Emb}(M,D(M))$. We will suppress the inclusion in the notation and simply write $\eta \in \Diff (M) \subseteq \text{Emb}(M,D(M)) \subseteq C^\infty (M,D(M))$.

The manifold structure on the smooth functions does not depend on the choice of the local addition, cf. \cite[Lemmata C.11 and 2.16 (b)]{schmeding_2022}. We may thus assume that the local addition is given by the Riemannian exponential map $\exp$ of a metric for which $\partial M$ is a totally geodesic submanifold of $D(M)$, \cite[Lemma 6.4]{EM70}. Denote by $\varphi_\eta$ the canonical parametrisation of $C^\infty (M,D(M))$ constructed in \eqref{can:charts} with respect to the Riemannian exponential map $\exp$. The idea is to prove that we can find an open neighbourhood on which the canonical parametrisations restrict to submanifold charts.

\textbf{Construction of an open section neighbourhood}
Since $\eta \in \text{Emb}(M,D(M))$ and this set is open, there exists an open $0$-neighbourhood $W \subseteq C^\infty_\eta (M,TD(M))$ which gets mapped by $\varphi_\eta$ into $\text{Emb}(M,D(M))$. Note that this in particular implies that for every $X \in W$ we have that $\varphi_{\eta}(X)|_{\partial M} \colon \partial M \rightarrow D(M)$ is an embedding (whence also a submersion for dimensional reasons).
Further, we observe that for every $X \in W$ 
and all $x \in \partial M$ the map $T_x \varphi_\eta(X) \colon T_x \partial M \rightarrow T_{\eta (x)}D(M)$ has rank $\text{dim } \partial M$.
Note that if we knew that the restriction of $\varphi_\eta (X)$ to $\partial M$ takes its image in $\partial M$, then the restriction becomes a submersion. 

Pick for every connected component $C_{0,i}$ of $M\setminus \partial M$ a point $x_{0,i} \in C_{0,i}$ and an open neighbourhood $\eta (x_{0,i}) \in  O_{0,i}\subseteq \eta (C_{0,i}) \subseteq M\setminus \partial M$. If the boundary $\partial M$ consists of several components $C_{1,i}$ we pick for every $i$ a point $x_{1,i} \in C_{1,i}$ and an open neighbourhood $O_{1,i}$ of $\eta (x_{1,i})$, such that $O_i \cap \partial M \subseteq C_{1,i}$, where $C_j = \eta(C_i)$. Shrinking $W$ further, we may assume that $\varphi_\eta (X)(x_{k,i})\in O_{k,i}$ for every $k \in \{0,1\}$, $i$ and $X \in W$.

\textbf{The submanifold chart around $\eta \in \Diff (M)$.}
Define the set  
$$S_\eta \coloneq W \cap \{F \in C^\infty_\eta (M,D(M)) | \forall x \in \partial M,\quad F(x)\in T(\partial M)  \}$$
and recall that we just intersected $W$ with a closed vector subspace of the sections. We claim, similar to \cite[Lemma 6.6]{EM70}, that $\varphi_\eta$ takes $S_\eta$ to $\Diff (M)$,
giving us a smooth (sub)manifold atlas by the restriction of the canonical charts and inducing a manifold structure on $\Diff (M)$.

So let us prove the claim: By construction if $X \in S_\eta$, then $X(x) \in T_{\eta(x)}\partial M$ for every boundary point $x$. We obtain $\varphi_\eta (X)(x) \in \partial M$ as the boundary is totally geodesic and $\varphi_\eta$ is by \eqref{can:charts} given as postcomposition with $\exp$. This implies by choice of the neighbourhood $W$ that for every $X \in S_\eta$, the map $\varphi_\eta(X)|_{\partial M}^{\partial M}$ makes sense and
\begin{enumerate}[$\bullet$]
\item is a smooth embedding and submersion by choice of $W$;
\item maps every connected component $C_i$ into the connected component $\eta(C_i)$.
\end{enumerate}
We deduce that $\varphi_\eta (X)$ induces a diffeomorphism of $\partial M$ if we (co)restrict it to the boundary (cf.\ the argument in \cite[Corollary 2.8]{schmeding_2022}).
So far, we obtained embeddings $\varphi_\eta (X) \colon M \rightarrow D(M)$ which induce diffeomorphisms on $\partial M$. By construction, $D(M) \setminus \partial M = M\setminus \partial M \sqcup D$, where $D$ in the disjoint union is a copy of $M\setminus \partial M$. Again, by construction of the open set $W$ we already know that for every $X \in W$, the map $\varphi_\eta (X)$ takes a point from every component of $M\setminus \partial M$ to a component of $M\setminus \partial M$. We already know that $\varphi_\eta (X)$ induces a diffeomorphism of the boundary $\partial M$, whence by continuity we must have $\varphi_\eta (X)(M) \subseteq M$. As $\varphi_\eta (X)$ is also an embedding, we deduce that it is an element in $\Diff (M)$ thus proving the claim. \smallskip

\textbf{Lie group structure}
Since $M$ is compact, the manifold $\Diff (M)$ is modelled on \Frechet spaces. To establish the smoothness of the group operations it therefore suffices to check that they are smooth in the convenient sense (see \cite[Appendix A.7]{schmeding_2022}). Due to the results in the previous section, the argument from \cite[Proof of Theorem 43.1]{KaM97} is applicable with obvious minor modifications. Thus $\Diff (M)$ is an infinite-dimensional Lie group with associated Lie algebra $\Lf (\Diff(M))=\VF(M;\partial M)$. Here $\VF(M;\partial M)$ is the Lie algebra of vector fields on $M$ tangent to the boundary with the negative of the usual Lie bracket of vector fields.

Let us now recall the notion of a regular Lie group $G$ with Lie algebra $\Lf (G)$: We say $G$ is regular if  for every $\eta \colon [0,1] \rightarrow \Lf (G)$ smooth, the ordinary differential equations
\begin{equation}\label{Lietypeeq}
\begin{cases}
\dot{\gamma}(t)&=\eta(t) \cdot \gamma(t) =: \delta^r(\gamma) \\
\gamma(0)&=\id_{M}
\end{cases}\end{equation}
admits a unique solution $\Evol (\eta) \colon [0,1] \rightarrow G$ and the solution depends smoothly on $\eta$. 

\begin{lemma}\label{lem:diffgp}
Let $M$ be a compact manifold with smooth boundary. Then the Lie group $\Diff(M)$ has the following properties.
\begin{enumerate}[\rm (i)]
\item A map $g \colon L \rightarrow \Diff (M)$ is smooth if and only if $g^\wedge \colon L \times M \rightarrow M$ is smooth.
\item  $\Diff (M)$ is a regular Lie group and using the identification $\Lf(\Diff(M))=\VF(M;\partial M)$, $\Evol$ sends a time-dependent vector field $X$ to its flow $\mathrm{Fl}^X=e^X$.
\end{enumerate}
\end{lemma}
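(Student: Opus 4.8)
The plan is to deduce both statements from the corresponding known facts for $C^\infty(M,D(M))$ together with the submanifold chart $\varphi_\eta|_{S_\eta}$ constructed above. For (i), the key observation is that the canonical manifold property (the ``exponential law'' remark for $C^\infty(M,N)$ with $N = D(M)$ boundaryless) gives that $g \colon L \to C^\infty(M,D(M))$ is smooth iff $g^\wedge \colon L \times M \to D(M)$ is smooth. Since $\Diff(M)$ carries the submanifold structure coming from the charts $\varphi_\eta$, a map $g \colon L \to \Diff(M)$ is smooth iff it is smooth as a map into $C^\infty(M,D(M))$ (submanifolds being initial; concretely, smoothness is checked in the restricted charts $\varphi_\eta|_{S_\eta}$, which are restrictions of the ambient charts). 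Combining these, $g$ is smooth iff $g^\wedge \colon L \times M \to D(M)$ is smooth, and since $g$ takes values in $\Diff(M)$ this $g^\wedge$ automatically factors through $M \subseteq D(M)$; one checks that $g^\wedge \colon L \times M \to M$ is smooth as a map into the submanifold-with-boundary $M$ (this is immediate because $M \hookrightarrow D(M)$ is an embedding and $g^\wedge$ has image in $M$). The converse direction is the same argument read backwards. This also identifies $T_{\id}\Diff(M)$, via $(Tg)^\wedge = \partial_L g^\wedge$, with the closed subspace $\VF(M;\partial M) \subseteq C^\infty(M,TD(M))$ described in Remark \ref{rem:expolaw_sect}, consistent with the Lie algebra identification stated before the lemma.

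For (ii), given a smooth curve $\eta \colon [0,1] \to \VF(M;\partial M)$, I would first invoke the last bullet of Section \ref{subsect:prelimSMB} (from \cite[Theorem 9.34 and Corollary 9.17]{Lee13}): since each $\eta(t)$ is tangent to $\partial M$, the time-dependent vector field integrates to a flow, i.e. there is a (unique) family $\mathrm{Fl}^\eta_t \colon M \to M$ of diffeomorphisms solving $\partial_t \mathrm{Fl}^\eta_t(x) = \eta(t)(\mathrm{Fl}^\eta_t(x))$, $\mathrm{Fl}^\eta_0 = \id_M$. Smooth dependence of the solution of an ODE on time and initial condition, plus smooth parameter dependence (here the parameter is carried by $\eta$ itself, viewed through $\eta^\wedge \colon [0,1]\times M \to TD(M)$ smooth), gives that $(t,x) \mapsto \mathrm{Fl}^\eta_t(x)$ is smooth. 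By part (i) this means $t \mapsto \mathrm{Fl}^\eta_t$ is a smooth curve in $\Diff(M)$, and differentiating the defining relation shows $\tfrac{d}{dt}\mathrm{Fl}^\eta_t = \eta(t)\cdot \mathrm{Fl}^\eta_t$ in the sense of \eqref{Lietypeeq} (here one uses the standard fact that right translation on $\Diff(M)$ is composition, which is smooth by (i) applied to $(\phi,\psi)\mapsto \phi\circ\psi$). Uniqueness of $\mathrm{Evol}(\eta)$ follows from uniqueness of flows. For the smooth dependence of $\mathrm{Evol}$ on $\eta$, I would again appeal to (i): the assignment $\eta \mapsto \mathrm{Evol}(\eta)$ is smooth as a map $C^\infty([0,1],\VF(M;\partial M)) \to C^\infty([0,1],\Diff(M))$ iff the associated map $([0,1] \times [0,1] \times M) \to M$, $(\eta$-parameter suppressed$)$, is smooth, which is precisely smooth parameter dependence of ODE solutions — this can be imported from the boundaryless double $D(M)$ (where the vector fields extend, since $\partial M$ being totally geodesic or by a collar argument the flow stays in $M$) using regularity of $\Diff(D(M))$, or proven directly.

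The main obstacle I anticipate is the smooth dependence on $\eta$ in part (ii), i.e. genuinely establishing regularity rather than just existence/uniqueness of $\mathrm{Evol}$. The cleanest route is to reduce to $\Diff(D(M))$, which is known to be a regular Lie group: extend each $\eta(t)$ to a vector field on $D(M)$ tangent to $\partial M$ (possible since $\eta(t) \in \VF(M;\partial M)$ and $\partial M$ is totally geodesic in the chosen metric, or simply by a collar construction), note its $D(M)$-flow preserves $M$ because it preserves $\partial M$ and cannot cross it, and then restrict; smoothness of the restriction map $\Diff(D(M)) \supseteq \{$flows preserving $M\} \to \Diff(M)$ follows from (i). Care is needed that the extension can be chosen to depend smoothly (even linearly, via a fixed tubular-neighborhood retraction) on $\eta$, so that regularity of $\Diff(D(M))$ transfers. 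The remaining points — smoothness of composition and inversion used implicitly, and the identification of $\delta^r$ with the flow equation — are routine given part (i) and the construction of the Lie group structure preceding the lemma.
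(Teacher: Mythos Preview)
Your proposal is correct and follows essentially the same route as the paper: part (i) is deduced from $\Diff(M)$ being a closed submanifold of the canonical manifold $C^\infty(M,D(M))$, and part (ii) uses (i) to translate \eqref{Lietypeeq} into a time-dependent ODE on the finite-dimensional manifold $M$, whence existence, uniqueness and smooth parameter dependence follow from standard ODE theory on compact manifolds (the paper simply cites \cite[Chapter 9]{Lee13}). Your fallback plan of extending to $D(M)$ and invoking regularity of $\Diff(D(M))$ is not needed; the direct argument you sketch first is exactly what the paper does, and the anticipated obstacle concerning smooth dependence on $\eta$ is handled precisely by applying (i) to a smooth family of $\eta$'s and appealing to finite-dimensional smooth parameter dependence.
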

\begin{proof}
Since $\Diff(M)$ is a closed submanifold of $C^\infty (M,D(M))$, $g \colon L \rightarrow \Diff(M)$ is smooth if and only if the mapping is smooth as a map to $C^\infty (M,D(M))$. Thus (i) follows from $C^\infty (M,D(M))$ being a canonical manifold.

Now for regularity of the Lie group, we can mimic the classical proofs for diffeomorphisms of manifolds without boundary found in \cite[Example 3.36]{schmeding_2022}. By propterty (i), we can identify the ordinary differential equation \eqref{Lietypeeq} on $\Diff (M)$ with a time dependent differential equation on a product of finite-dimensional compact manifolds. Existence of solutions and smooth parameter dependence follows then by the usual arguments, see e.g. \cite[chapter 9]{Lee13}.
\end{proof}

\subsection{Diffeomorphism groups as a principle bundle} As outlined in the introduction are we interested in the sequence of groups \eqref{eq:sequence}, i.e.,
$$\mathbf{1}\rightarrow \Diff_0^{\partial, \id} (M) \xrightarrow{\iota} \Diff_0 (M) \xrightarrow{R}\Diff_0 (\partial M) \rightarrow \mathbf{1}$$
where the mapping $\iota$ is inclusion and $R$ is given by restricting a diffeomorphism of~$M$ to the boundary. Let us first justify why this sequence indeed is an exact sequence of Lie groups: Clearly the first three terms in this sequence are exact on the level of group morphisms. So far we have neither established that the left hand term is a Lie subgroup, nor that $R$ is smooth. We shall do so in a moment. Further, it is not obvious that $R$ is surjective. We shall construct preimages to arbitrary elements in the unit component of the diffeomorphism group of the boundary in Lemma \ref{la:diff_ext}.

Let us first establish that $R$ is smooth. Recall from Section~\ref{sec:diffgp} that $\Diff (M)$ is a closed submanifold of $C^\infty (M,D(M))$. Furthermore, \cite[Proposition 2.10]{AaGaS20} establish that $\Diff (\partial M)$ is diffeomorphic to a closed submanifold of $C^\infty (\partial M,D(M))$. This identifies $R$ as the (co-)restriction of the smooth map
$$C^\infty (M,D(M)) \rightarrow C^\infty (\partial M, DM),\quad  \phi \mapsto \phi \circ \text{inc}_{\partial M},$$
since domain and codomain are canonical manifolds of mappings, smoothness follows from \cite[Exercise 2.3.1 (a)]{schmeding_2022}.
Hence $R$ is a Lie group morphism. We will now explicitly construct submanifold charts for its kernel and prove that the kernel is a closed Lie subgroup. 

\begin{lemma}\label{la:subgroup}
The inclusion $\Diffpid (M) \rightarrow \Diff (M)$ turns $\Diffpid(M)$ into a closed Lie subgroup of $\Diff(M)$. Moreover, $\Diffpid (M)$ is a regular Lie group.
\end{lemma}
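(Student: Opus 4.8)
The plan is to exhibit explicit submanifold charts for $\Diffpid(M)$ inside the submanifold charts for $\Diff(M)$ constructed above, and then to verify regularity by the same ODE argument used in Lemma~\ref{lem:diffgp}. First I would fix $\eta \in \Diffpid(M)$ and take the chart $\varphi_\eta$ on the open neighborhood $S_\eta \subseteq C^\infty_\eta(M, D(M))$ of $0$ as constructed in Section~\ref{sec:diffgp}. Since $\eta$ restricts to the identity on $\partial M$, the totally geodesic boundary and the description of $\varphi_\eta$ as postcomposition with $\exp$ show that $\varphi_\eta(X)$ fixes $\partial M$ pointwise precisely when $X(x) = 0_{\eta(x)} = 0_x$ for every $x \in \partial M$. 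Thus the candidate model space is the closed vector subspace
\begin{equation*}
C^\infty_{\eta, \partial}(M, D(M)) \coloneq \{ F \in C^\infty_\eta(M, TD(M)) \mid F(x) = 0_x \text{ for all } x \in \partial M \},
\end{equation*}
and I would set $S_\eta^\partial \coloneq S_\eta \cap C^\infty_{\eta,\partial}(M, D(M))$. The key point is that $\varphi_\eta$ maps $S_\eta^\partial$ bijectively onto $\varphi_\eta(S_\eta) \cap \Diffpid(M)$: one inclusion is the computation just sketched; for the reverse, if $\psi = \varphi_\eta(X) \in \Diffpid(M) \cap \varphi_\eta(S_\eta)$, then $\psi(x) = x$ for $x \in \partial M$ forces $\tilde\Sigma^{-1}(\eta, \psi)(x)$ to be the zero vector, i.e. $X(x) = 0_x$, using injectivity of $\tilde\Sigma$. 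This gives a submanifold atlas modelled on a closed subspace (hence complemented is not needed, but the subspace is a \Frechet space), so $\Diffpid(M)$ is a closed submanifold of $\Diff(M)$; since it is also a subgroup and the restricted group operations are smooth (being restrictions of smooth maps to a submanifold that is invariant under them), it is a closed Lie subgroup. Closedness also follows directly from $\Diffpid(M) = R^{-1}(\{\id_{\partial M}\})$ with $R$ continuous.

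Its Lie algebra is then identified as $\ker(\Lf(R)) = \VF(M; \partial M)_{\partial} \coloneq \{ X \in \VF(M;\partial M) \mid X|_{\partial M} = 0 \}$, the vector fields on $M$ vanishing on the boundary, which is consistent with the model space above.

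For regularity, I would repeat the argument in the proof of Lemma~\ref{lem:diffgp}(ii): given a smooth curve $\eta \colon [0,1] \to \VF(M;\partial M)_\partial$, solve the flow equation \eqref{Lietypeeq} in the ambient regular Lie group $\Diff(M)$ to get $\Evol(\eta) \colon [0,1] \to \Diff(M)$, and observe that since each $\eta(t)$ vanishes on $\partial M$, the flow fixes $\partial M$ pointwise for all $t$, so $\Evol(\eta)$ actually takes values in the closed Lie subgroup $\Diffpid(M)$. Because $\Diffpid(M)$ carries the submanifold structure, a curve into it is smooth iff it is smooth into $\Diff(M)$, so both existence of the evolution and its smooth dependence on $\eta$ transfer verbatim from $\Diff(M)$ to $\Diffpid(M)$. (Equivalently, one invokes the general principle that a closed Lie subgroup of a regular Lie group whose Lie algebra is closed and complemented, with the property that it contains the relevant evolutions, is again regular.)

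The main obstacle I anticipate is the first step: verifying cleanly that $\varphi_\eta(S_\eta) \cap \Diffpid(M) = \varphi_\eta(S_\eta^\partial)$, i.e. that a diffeomorphism in the chart fixing the boundary pointwise corresponds exactly to a section vanishing on the boundary. This hinges on the totally geodesic choice of metric (so that $\exp$ of a boundary-tangent vector stays in $\partial M$, and $\exp_x 0_x = x$) together with injectivity of $\tilde\Sigma$ on $O$; everything else is a routine transfer of structure along a closed embedding. One should also take a moment to confirm that the submanifold charts so obtained are mutually compatible, but this is immediate since they are restrictions of the already-compatible canonical charts on $C^\infty(M, D(M))$.
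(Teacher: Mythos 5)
Your proposal is correct and follows essentially the same route as the paper: submanifold charts for $\Diffpid(M)$ are obtained by restricting the canonical charts $\varphi_\eta$ to the closed subspace of sections vanishing on $\partial M$ (using the totally geodesic metric on $D(M)$ and injectivity of $\tilde\Sigma$), and regularity is reduced to checking that evolutions of curves in the subalgebra of boundary-vanishing vector fields stay in $\Diffpid(M)$, exactly as in the paper's appeal to the general criterion for closed subgroups of regular Lie groups. The only cosmetic difference is that you carry out the chart computation at an arbitrary $\eta \in \Diffpid(M)$ where the paper displays it only at the identity and cites the literature for the general case.
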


\begin{proof}
To see that $\Diffpid(M)$ becomes a submanifold of $\Diff (M)$, recall that the canonical charts of $\Diff (M)$ arise as restrictions of \eqref{can:charts} to suitable subspaces where the local addition is the Riemannian exponential on $D(M)$ and $\partial M$ is a totally geodesic submanifold of $D(M)$. In particular, the Riemannian exponential map vectors tangent to the boundary to the boundary. Now similar to \cite[Remark 5.6]{glo22diff} the canonical charts restrict to submanifold charts of $\Diffpid (M)$. For the chart around the identity of $\Diff (M)$ this yield the following:
$$\varphi_{\id}^{-1} (g)  \in \{X \in \VF (M) \mid X(b)=0, \forall b \in \partial M\} \text{ if and only if } g \in \Diffpid (M).$$ 
We deduce that $\Diffpid(M)$ is a subgroup and a closed submanifold of $\Diff (M)$, hence a closed Lie subgroup. 

For the regularity, note that $\Diffpid (M)$ is a closed Lie subgroup of the regular Lie group $\Diff (M)$. Hence \cite[Lemma B.5]{SaW16} implies that $\Diffpid(M)$ will be regular if we can shows that solutions to the Lie type equations \eqref{Lietypeeq} exist in $\Diffpid (M)$ for every smooth curve $\eta \colon [0,1] \rightarrow \Lf(\Diffpid(M))$. Using the inclusion $\Lf(\Diffpid(M))\rightarrow \Lf(\Diff(M))=\VF(M;\partial M)$, we only need to show that the solution $\Evol (\eta)$ of \eqref{Lietypeeq} on $\Diff(M)$ stays in the subgroup $\Diffpid (M)$ for curves taking values in the Lie subalgebra $\{X \in \VF (M) \mid X(x)=0, \text{for all $x \in \partial M$}\}$ associated by the inclusion to the closed subgroup $\Diffpid(M)$. However, by \Cref{lem:diffgp}, $\Evol(\eta)$ is the flow of the time-dependent vector field on $M$ associated to $\eta$. Since $\eta$ vanishes on every point of the boundary $\partial M$, the flow associated to $\eta$ is the identity on the boundary, whence an element in $\Diffpid(M)$. This completes the proof. 
\end{proof}

Recall the following standard definitions, see e.g. \cite{Michor08,bauer2023regularity}:
\begin{definition}
Consider the short exact sequence
$$\mathbf{1} \rightarrow  G \xrightarrow{\iota} H \xrightarrow{\pi} K \rightarrow \mathbf{1}$$
of Lie group morphisms. The sequence
\begin{enumerate}[\rm (i)]
\item \emph{admits local sections} if $\pi$ admits a local smooth section $s$ near the identity (whence at every point), and $\iota$ is initial, i.e.\ any $g \colon M \rightarrow H$ with image in $\iota(G)$ is smooth if and only if $g$ is smooth as a map to $\iota (G)$.
 \item \emph{admits local retractions} if there exists a smooth map $r \colon U \rightarrow G$, where $U$ is an open identity neighbourhood such that $r(\iota (g))=g$ for all $g\in \iota^{-1}(U)$ and $\pi$ is final, i.e.\ any $f \colon K \rightarrow L$ is smooth if and only if $f\circ \pi$ is smooth.
\item is \emph{split} if there exists a smooth group homomorphism $S \colon K\rightarrow H$ such that $\pi\circ S = \id_K$.
\end{enumerate}
Note that a short exact sequence of Lie groups admits local sections if and only if it admits local retractions since $h = s(\pi(h))\cdot \iota (r(h))$. Further, if the sequence admits local sections, then  $H$ is locally diffeomorphic to $G \times K$ via $(r,\pi)$ with local inverse $(\iota \circ \mathrm{pr}_1)\cdot (s \circ \mathrm{pr}_2)$. In particular, if the sequence splits, then $H$ is diffeomorphic to a semidirect product of $G$ and $K$ (see \cite[15.12]{Michor08}).
\end{definition}

We shall now prove that the sequence \eqref{eq:sequence} admits local sections, i.e.\ that there is a smooth section of the projection $R$ on some identity neighbourhood of $\Diff (\partial M)$. For this we need some preparation.

\subsection{Construction of a boundary thickening}\label{subsect_collar_thick}
Let $h \colon \partial M \times [0,1) \rightarrow M$ be a smooth collar for the boundary as in Section~\ref{subsect:prelimSMB}. Then for every $0<r< 1$, the set $\partial M \times [0,r]$ is a submanifold with smooth boundary of $\partial M \times [0,1)$. Since $M$ is compact, so is the closed subset $\partial M$ and we deduce that for each $r$, the set $N_r \coloneq h(\partial M \times [0,r])$ is a compact submanifold with boundary of $M$. We can think of $N_r$ as a thickening of the boundary. Now pick and fix once and for all numbers $s= \tfrac{1}{2}<s'<r$ and a smooth cut off function $C \colon [0,r] \rightarrow [0,1]$ such that $C|_{[0,s]}\equiv 1$ and $C|_{[s',r]} \equiv 0$. This yields a smooth bump function 
\begin{equation} \label{ChiFunction}
\chi \colon M \rightarrow [0,1], \quad k \mapsto \begin{cases} 0 & \text{ if } k \not \in N_h \\
C (\mathrm{pr}_2 \circ h^{-1}(k)) & \text{ if } k \in N_h
\end{cases}\end{equation}
which by construction is constant equal to $1$ on the neighbourhood $N_s$ of $\partial M$ and vanishes outside of $N_{s'}$. For a vector field $X \in \VF(\partial M)$ we can now define an extension of the vector field to $M$ via the following formula
\begin{equation}\label{ext:VF}
\hat{X} (k) \coloneq \chi(k) \cdot \left(Th (X,\mathbf{0})(h^{-1}(k))\right),
\end{equation}
where $\mathbf{0}$ denotes the zero vector field in $\VF([0,r])$.
Note the abuse of notation in \eqref{ext:VF}: Since $\chi(k)=0$ for all $k \not \in N_r$, we interpret the value of $\hat{X}$ at such a $k$ as $0$ even though $h^{-1}$ is not defined for these values. 

Using the auxiliary functions from \eqref{ChiFunction}, we can now construct a continuous linear extension map for vector fields on the boundary.

\begin{lemma}\label{lem:ext_operator}
The extension $\hat{X}$ of vector fields $X$ on the boundary $\partial M$ constructed in \eqref{ext:VF}, yields an associated map 
$$\theta \colon \VF (\partial M ) \rightarrow \VF (M;\partial M), \quad X \mapsto \hat{X}$$
which is continuous and linear.
\end{lemma}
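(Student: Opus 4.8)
The plan is to verify linearity and continuity of $\theta$ by unwinding the definition in \eqref{ext:VF} and reducing everything to statements about the canonical (compact-open $C^\infty$) topologies and the exponential law for vector fields recorded in Remark~\ref{rem:expolaw_sect}. Linearity is immediate: for fixed $k$, the assignment $X \mapsto \hat X(k) = \chi(k)\cdot Th(X,\mathbf 0)(h^{-1}(k))$ is the composition of the linear maps $X \mapsto (X,\mathbf 0)$, the (fibrewise linear, so linear on sections) tangent map $Th$, evaluation at $h^{-1}(k)$, and scalar multiplication by $\chi(k)$; since this holds pointwise in $k$, $\theta$ itself is linear. The only genuine content is continuity.

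For continuity, first I would note that $\VF(\partial M)$ and $\VF(M;\partial M)$ are \Frechet spaces (closed subspaces of $C^\infty(\partial M, T\partial M)$ and $C^\infty(M,TDM)$ respectively, as in Remark~\ref{rem:expolaw_sect}), so it suffices to prove $\theta$ is continuous, equivalently (by linearity) bounded, equivalently that the associated map is smooth or at least that it pulls back smooth curves to smooth curves. Concretely I would use the exponential law: a linear map $L \to \VF(M;\partial M)$ is continuous/smooth iff the adjoint $L \times M \to TDM$ is smooth. So pick a smooth curve (or more generally a smooth map) $c \colon L \to \VF(\partial M)$; by Remark~\ref{rem:expolaw_sect} the adjoint $c^\wedge \colon L \times \partial M \to T\partial M$ is smooth. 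I must show $(\theta \circ c)^\wedge \colon L \times M \to TDM$, $(\ell,k) \mapsto \chi(k)\cdot Th\big(c^\wedge(\ell, \pr_1 h^{-1}(k)), \mathbf 0\big)$ is smooth. On the open set $\{k : \chi(k) \neq 0\} \subseteq \intOP N_{s'}$ this is a composition of the smooth maps $h^{-1}$, $c^\wedge$ (precomposed with the projection $\partial M \times [0,r] \to \partial M$), the diffeomorphism $Th$, and multiplication by the smooth function $\chi$, hence smooth; on the open set $\{k : k \notin N_s\}$ (where $\chi \equiv 0$ near $\partial M$... more precisely on the complement of $\overline{N_s}$) the map is identically $0$, hence smooth. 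These two opens cover $M$ because $\overline{\{\chi \neq 0\}} \subseteq N_{s'}$ while $N_s \subseteq \{k : \chi(k)=1\}$, so actually I should be slightly careful and use the cover by $\{\chi \neq 0\}$ (contained in $\intOP N_{s'}$, where $h^{-1}$ is defined and smooth) and the interior of $\{\chi = 0\}$; since $s < s'$ these two open sets cover $M$. Smoothness being local, $(\theta\circ c)^\wedge$ is smooth, so $\theta \circ c$ is smooth, and as this holds for all such $c$ (it suffices to test on smooth curves, as the model spaces are \Frechet) $\theta$ is continuous. Finally I should check $\theta$ genuinely lands in $\VF(M;\partial M)$: for $y \in \partial M$ one has $h^{-1}(y) = (y,0)$, $\chi(y) = 1$, and $Th(X,\mathbf 0)(y,0) = Th|_{T(\partial M \times \{0\})}(X(y), 0) \in T_y \partial M$ since $h$ restricts to a diffeomorphism $\partial M \times \{0\} \to \partial M$; hence $\hat X(y) \in T_y\partial M$ as required.

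The main obstacle, such as it is, is purely bookkeeping: making the open cover of $M$ explicit enough that on each piece the formula \eqref{ext:VF} is manifestly a composition of smooth maps (the abuse of notation flagged after \eqref{ext:VF} — that $h^{-1}$ is undefined where $\chi$ vanishes — has to be handled by choosing the cover so that on the piece where $h^{-1}$ appears, $\chi$'s support keeps us inside the collar, and on the other piece the map is literally zero). There is no analytic difficulty: everything reduces, via Remark~\ref{rem:expolaw_sect} and the fact that $M$ is compact so the model spaces are \Frechet (so boundedness, continuity, and smoothness of linear maps coincide and can be tested on curves), to the chain-rule observation that a composition of smooth maps between finite-dimensional manifolds multiplied by a smooth bump function is smooth. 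I would therefore present the argument as: (1) state that it suffices to check the adjoint is smooth; (2) exhibit the two-set open cover and smoothness on each; (3) conclude continuity and linearity; (4) remark that the image lies in $\VF(M;\partial M)$.
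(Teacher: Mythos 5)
Your argument is correct in substance but follows a genuinely different route from the paper. The paper factors $\theta = m_\chi \circ I \circ E$ through $\VF(\partial M\times[0,r])\cong\VF(N_r)$, then isolates the delicate step as the extension-by-zero operator: it shows the multiplication $Y\mapsto\chi\cdot Y$ lands in the closed subspace of vector fields that are flat at the outer boundary $h(\partial M\times\{r\})$, and verifies continuity of the trivial extension from that subspace to $\VF(M)$ by hand on the subbasic open sets $\lfloor L,W,\ell\rfloor$ of the compact-open $C^\infty$-topology. You instead invoke that for linear maps between \Frechet spaces continuity, boundedness and convenient smoothness coincide, test $\theta$ against smooth maps $c\colon L\to\VF(\partial M)$ from finite-dimensional $L$, and check smoothness of the adjoint $(\theta\circ c)^\wedge\colon L\times M\to TD(M)$ locally. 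This buys you something real: the extension-by-zero difficulty, which is where the paper spends most of its effort, disappears entirely once you work at the level of adjoint maps, because smoothness there is a local condition on $L\times M$ and the two local formulas (the collar formula and the constant $0$) visibly agree on the overlap. The price is that you lean on the equivalence of continuity and curve-smoothness for linear maps between \Frechet spaces, which is standard convenient calculus but is a slightly heavier black box than the paper's hands-on estimate.

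One detail needs repair: the two open sets you finally settle on, $\{\chi\neq 0\}$ and $\intOP\{\chi=0\}$, do \emph{not} cover $M$ — any point on the frontier of $\supp\chi$ (and such points exist, since $\chi(0)$-level is $1$ on $N_s$ and $0$ outside $N_{s'}$) lies in neither. The fix is immediate and in the spirit of what you already wrote: take instead the open collar $h(\partial M\times[0,r))$, on which $h^{-1}$ is defined and the formula \eqref{ext:VF} is a composition of smooth maps multiplied by the smooth function $\chi$, together with $M\setminus N_{s'}$, which is open because $N_{s'}$ is compact and on which the adjoint is identically zero since $\supp\chi\subseteq N_{s'}$. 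These two sets do cover $M$, and the rest of your argument goes through unchanged.
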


\begin{proof}
Clearly $\theta$ is linear and takes its image in $\VF(M;\partial M)$ (cf. \eqref{ext:VF}). So all we have to prove is continuity of $\theta$ as a mapping to $\VF(M)$. For this, we shall consider  the spaces of vector fields $\VF(\partial M)$, $\VF(\partial M \times [0,r])$, $\VF(N_r)$ and $\VF(M)$. All of these spaces are \Frechet spaces with respect to 
the compact open $C^\infty$-topology, \cite[Remark A.3 (c)]{AaGaS20}. In particular, $\VF(\partial M \times [0,r]) \cong \VF(N_r)$ via the isomorphism of locally convex spaces $I (Y)\coloneq Th \circ Y \circ h$. Consider now 
\begin{align}\label{ext:trivial}
E \colon \VF(\partial M) \rightarrow \VF(\partial M \times [0,r]),\quad X \mapsto (X,\mathbf{0}).
\end{align}
Since $[0,r]$ is compact, \eqref{ext:trivial} is continuous, cf. e.g.\ \cite[Corollary 11.10 with Remark 11.5]{MR583436}. With the identification $I$ of $\VF(N_r)$ and $\VF(\partial M \times [0,r])$ we rewrite $\theta$ now as
$\theta(X)= m_\chi \circ I \circ E(X)$ where $I,E$ are already known to be continuous linear and $m_\chi$ is the map defined via
$$m_\chi \colon \VF(N_r) \rightarrow \VF(M), \quad Y \mapsto \chi \cdot Y$$
and the vector field on the right hand side is trivially extended by the zero vector field beyond $N_r$. To see that this mapping is continuous, denote by $dN_r$ the topological boundary in $M$, i.e, $dN_r = h(\partial M \times \{r\})$. Recall from \cite[Lemma A.4]{AaGaS20} that point evaluations are continuous on the space of vector fields. As derivations are continuous with respect to the compact open $C^\infty$-topology (cf.\ \cite[p. 313, 1.12]{AaGaS20}), we obtain a closed vector subspace 
$$\VF_{d\text{flat}}(N_r)= \{Y \in \VF(N_r) \mid \forall b \in d N_r, k \in \N_0, \ T^kY(b)=0\}.$$ 
Now consider the multiplication map,
$$\tilde m_\chi \colon \VF(N_r) \rightarrow \VF(N_r),\quad Y \mapsto \chi \cdot Y.$$
By the exponential law from Remark \ref{rem:expolaw_sect}, $\tilde m_\chi$ is smooth if and only if the associated map $\tilde m_\chi^\wedge \colon \VF (N_r) \times N_r \rightarrow TN_r$, $(X,n) \mapsto \chi (n)\cdot X(n) = \ev ((\text{mult}_\chi)_*(X),n)$ is smooth. Here $(\text{mult}_\chi)_*$ is the pushforward with the smooth map implementing fibrewise multiplication by $\chi$, i.e. $TM \rightarrow TM, v \mapsto \chi(\pi_M(v))\cdot v$. The pushforward is smooth by \cite[Corollary 1.22 and Remark A.3(c)]{AaGaS20}. As evaluation $\ev$ is smooth by \cite[Lemma A.4]{AaGaS20}, we conclude that $\tilde m_\chi$ is smooth and takes its values in the closed vector subspace $\VF_{d \text{flat}}(N_r)$. To establish that $m_\chi$ is continuous it thus suffices to prove that the mapping $\VF_{d \text{flat}}(N_h) \rightarrow \VF(M;\partial M)$ which sends a vector field to its trivial extension by $0$ is continuous. It suffices to test continuity on the basic neighbourhoods of the compact open $C^\infty$-topology. Such a basic open set $\lfloor L, W,\ell\rfloor$ tests whether a function and up to finitely many derivatives $\ell \in \N_0$ take their values on a given compact set $L$ in an open set $W \subseteq T^\ell M$. See \cite[Section 2.1]{schmeding_2022} for details. Now as $N_r$ is closed, $\lfloor L, W,\ell\rfloor$ gives rise to the basic open neighbourhood $\lfloor L\cap N_r, W \cap T^\ell N_r, \ell\rfloor$. It is easy to see that if $Z \in \lfloor L, W,\ell\rfloor$ is an extension of a vector field in $\VF_{d \text{flat}}(N_r)$, then $Z \in \lfloor L\cap N_r, W\cap T^\ell N_r, \ell\rfloor$ and every other vector field in $\lfloor L\cap N_r, W \cap T^\ell N_r, \ell\rfloor$ will also be contained in $\lfloor L, W,\ell\rfloor$. We conclude that $M_\chi$ and thus also $\theta$ is continuous.
\end{proof}

We can use the extension of vector fields to construct an extension of a diffeomorphism as follows. Consider the right logarithmic derivative 
$$\delta^r \colon C^\infty ([0,1],\Diff (\partial M)) \rightarrow C^\infty ([0,1],\VF(\partial M)), c \mapsto (t \mapsto T\rho_{c(t)^{-1}}(c^\prime(t))),$$
where $\rho_g \colon \Diff (\partial M) \rightarrow \Diff (\partial M)$ denotes right multiplication with an element $g$. Recall that for curves starting at the identity, the evolution $\Evol$ sending a (time-dependent) vector field to its flow inverts $\delta^r$. We now recall that every $\psi$ in the identity component of $\Diff (\partial M)$ admits a smooth path $c_\psi \colon [0,1] \rightarrow \Diff (\partial M)$ such that $c_\psi(0)=\id_{\partial M}$ and $c_\psi (1) = \psi$. Then we obtain a diffeomorphism extending $\psi$ via the following construction:

\begin{lemma}\label{la:diff_ext}
Let $\psi \in \Diff (\partial M)$ be a diffeomorphism in the identity component of $\Diff (\partial M)$ with associated smooth path $c_\psi$. Taking the evolution $\Evol$ in $\Diff (M)$, the diffeomorphism 
$$\hat{\psi} \coloneq \Evol (\theta \circ \delta^r (c_\psi))(1) \in \Diff (M)$$
satisfies $\hat{\psi}|_{\partial M} = \psi$
\end{lemma}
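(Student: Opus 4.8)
The key idea is that $\theta$ turns a time-dependent vector field on $\partial M$ into one on $M$ which, near the boundary, agrees with the "obvious" extension via the collar, and in particular restricts back to the original on $\partial M$; and that $\Evol$ on $\Diff(M)$ is, by Lemma~\ref{lem:diffgp}, just the flow. So the restriction to $\partial M$ of $\Evol(\theta\circ\delta^r(c_\psi))$ should be the flow of $\delta^r(c_\psi)$ on $\partial M$, which is exactly $c_\psi$, ending at $\psi$. I would make this precise as follows.

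\begin{proof}
Write $\eta \coloneq \delta^r(c_\psi) \in C^\infty([0,1],\VF(\partial M))$, so that by definition of the right logarithmic derivative and the fact that $\Evol$ inverts $\delta^r$ on curves starting at the identity, we have $\Evol(\eta) = c_\psi$ in $\Diff(\partial M)$; in particular $\Evol(\eta)(1) = c_\psi(1) = \psi$. Now consider $\theta\circ\eta \in C^\infty([0,1],\VF(M;\partial M))$, which is a genuine smooth curve into $\Lf(\Diff(M))$ by Lemma~\ref{lem:ext_operator}, so $\Evol(\theta\circ\eta)$ is defined in the regular Lie group $\Diff(M)$ by Lemma~\ref{lem:diffgp}, and $\hat\psi = \Evol(\theta\circ\eta)(1)$. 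By Lemma~\ref{lem:diffgp}(ii), $t\mapsto \Evol(\theta\circ\eta)(t)$ is the flow of the time-dependent vector field $t\mapsto \theta(\eta(t))$ on $M$.

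The crucial point is the compatibility of $\theta$ with restriction to the boundary. From the explicit formula \eqref{ext:VF}, for $k\in\partial M$ we have $\chi(k)=1$ (since $\partial M\subseteq N_s$ and $\chi\equiv 1$ on $N_s$) and $h^{-1}(k) = (k,0)$, so $\theta(X)(k) = Th(X,\mathbf{0})(k,0)$. Since $h|_{\partial M\times\{0\}}$ is the canonical diffeomorphism onto $\partial M$ and the $[0,r]$-component of $(X,\mathbf{0})$ vanishes, this gives $\theta(X)(k) = X(k)\in T_k\partial M$; that is, $\theta(X)|_{\partial M} = X$ under the identification of $\partial M$ with $\partial M\times\{0\}$. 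Consequently, for each $t$ the vector field $\theta(\eta(t))$ on $M$ is tangent to $\partial M$ and restricts on $\partial M$ to $\eta(t)$. By uniqueness of solutions to the flow equation, the restriction to $\partial M$ of the flow of $t\mapsto\theta(\eta(t))$ on $M$ coincides with the flow of $t\mapsto\eta(t)$ on $\partial M$. Hence
$$\hat\psi|_{\partial M} = \big(\Evol(\theta\circ\eta)(1)\big)\big|_{\partial M} = \Evol(\eta)(1) = c_\psi(1) = \psi,$$
as claimed.
\end{proof}

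\textbf{Main obstacle.} The one step needing genuine care is the interchange of "restrict the flow to $\partial M$" with "flow the restricted vector field", i.e.\ justifying that the boundary flow of $\theta(\eta(\cdot))|_{\partial M}$ really is the restriction of the $M$-flow. This is where tangency of $\theta(\eta(t))$ to $\partial M$ (so that $\partial M$ is flow-invariant) together with uniqueness of integral curves is used, and it is the reason the collar formula \eqref{ext:VF} was set up so that $\chi\equiv 1$ near $\partial M$ and the normal component is $\mathbf{0}$. Everything else is bookkeeping: that $\theta\circ\delta^r(c_\psi)$ is a smooth curve into $\VF(M;\partial M)$ (Lemmas~\ref{lem:ext_operator} and the regularity in Lemma~\ref{lem:diffgp}) and that $\Evol$ inverts $\delta^r$ for curves based at the identity.
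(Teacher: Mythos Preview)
Your proof is correct and follows essentially the same approach as the paper: show that $\theta(X)|_{\partial M}=X$ from the explicit collar formula, identify $\Evol$ with the flow via Lemma~\ref{lem:diffgp}, use that the restriction to $\partial M$ of the flow of a tangent vector field is the flow of the restricted vector field, and conclude via $\Evol$ inverting $\delta^r$. The paper's argument is the same, just more tersely stated; your version spells out the restriction identity and the uniqueness-of-flows step that the paper asserts in a single sentence.
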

\begin{proof}
For $k \in \partial M$ and $t \in [0,1]$, we have by construction of the extension operator $\theta$ the following relation
$$\theta (\delta^r (c_\psi)(t))(k)=\delta^r (c_\psi)(t)(k).$$
The evolution in $\Diff (M)$ is (as the evolution in $\Diff (\partial M)$) the map sending a time dependent vector field to its flow (see Section~\ref{sec:diffgp} and \cite[Example 3.36]{schmeding_2022}). Since the restriction of a flow to a vector field tangential to the boundary is the flow of the vector field restricted to the boundary, and the evolution inverts the logarithmic derivative, we deduce
$$\Evol (\theta (\delta^r (c_\psi))(1)|_{\partial M}^{\partial M} = c_\psi (1) = \psi$$
by construction of $c_\psi$.
\end{proof}
Note that Lemma \ref{la:diff_ext} implies that the map $R$ in \eqref{eq:sequence} is surjective. 

\subsection{Construction of a local section} \label{setup:localisation} Pick now an open identity neighbourhood $\id_{\partial M} \in \Omega \subseteq \Diff (\partial M)$ together with a manifold chart $\kappa \colon \Omega \rightarrow \Omega' \subseteq \VF(\partial M)$ such that $\kappa (\id_M) = 0_{\partial M}$. Shrinking $\Omega$ and $\Omega'$ if necessary, we may assume that $\Omega'$ is an open disc $0$-neighbourhood. If $\psi \in \Omega'$, it thus makes sense to define the smooth curve 
$f_\psi (t) \coloneq t \psi $ defined for $t$ small enough with values in $\Omega'$.
We will now identify $\Omega$ and $\Omega'$ via the diffeomorphism $\kappa$ and suppress this in the notation.
 For $\psi \in \Omega$ and $t$ small we can define the map 
\begin{equation}\label{curve:bdr:primitive}
G \colon \Omega \times  [0,1] \rightarrow \Lf (\Diff(\partial M))=\VF (\partial M),\quad (\psi, t) \mapsto \delta^r (f_\psi)(t).
\end{equation}
Apply the exponential law \cite[Lemma A.4]{AaGaS20} to see that $\Omega \rightarrow C^\infty ([0,1],\Omega), \psi \mapsto f_\psi$ is smooth, whence $G$ is smooth. Applying the exponential law a second time to $\theta \circ G$ (with $\theta$ as in \Cref{lem:ext_operator}, the map $(\theta \circ G)^\vee \colon \Omega \rightarrow C^\infty ([0,1],\VF(M;\partial M))$, $\psi \mapsto \theta(G(\psi)(\cdot))$ is smooth. 

\begin{proposition}\label{prop:ex:smooth-sect}
Using the evolution $\Evol$ of $\Diff (M)$, the map
\begin{equation}\label{splitting:section}
k \colon \Omega \rightarrow \Diff (M) , \qquad \psi \mapsto \Evol ((\theta \circ G)^\vee (\psi))(1).
\end{equation}
is a smooth local section of the projection $R\colon \Diff (M) \rightarrow \Diff (\partial M)$, $\psi \mapsto \psi|_{\partial M}^{\partial M}$ on an identity neighbourhood in $\Diff (\partial M)$. The section $k$ is normalised as $k(\id_{\partial M}) =\id_M$.
\end{proposition}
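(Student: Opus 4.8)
The plan is to verify the three assertions in the statement: smoothness of $k$, the section property $R \circ k = \id$ on $\Omega$, and the normalization $k(\id_{\partial M}) = \id_M$. Smoothness is essentially already assembled: in the paragraph preceding the proposition we established that $(\theta \circ G)^\vee \colon \Omega \to C^\infty([0,1],\VF(M;\partial M))$ is smooth, and by Lemma~\ref{lem:diffgp}(ii) the evolution $\Evol$ of the regular Lie group $\Diff(M)$ is smooth on $C^\infty([0,1],\Lf(\Diff(M))) = C^\infty([0,1],\VF(M;\partial M))$; composing with the (smooth) evaluation at time $1$ gives that $k$ is smooth as a map $\Omega \to \Diff(M)$. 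So the first step is just to spell out this composition.

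The main point, and the step I expect to require the most care, is $R(k(\psi)) = \psi$ for $\psi \in \Omega$. The idea is exactly the mechanism of Lemma~\ref{la:diff_ext}: by construction of $\theta$ (see \eqref{ext:VF} and the key identity $\theta(\delta^r(f_\psi)(t))(k) = \delta^r(f_\psi)(t)(k)$ for $k \in \partial M$ recorded in the proof of Lemma~\ref{la:diff_ext}), the extended time-dependent vector field $(\theta \circ G)^\vee(\psi)$ restricts on $\partial M$ to the time-dependent vector field $G(\psi) = \delta^r(f_\psi)$. Since the restriction to $\partial M$ of the flow of a vector field tangent to $\partial M$ is the flow of the restricted vector field (property used already in Lemma~\ref{la:diff_ext} and Lemma~\ref{la:subgroup}), and since $\Evol$ in $\Diff(\partial M)$ inverts $\delta^r$ on curves starting at $\id_{\partial M}$, we get
\[
R(k(\psi)) = \Evol\bigl((\theta \circ G)^\vee(\psi)\bigr)(1)\big|_{\partial M}^{\partial M} = \Evol_{\Diff(\partial M)}\bigl(\delta^r(f_\psi)\bigr)(1) = f_\psi(1).
\]
Here I must check that $f_\psi(1)$ makes sense and equals $\psi$: recall $f_\psi(t) = t\psi$ is defined via the chart $\kappa$ identifying $\Omega$ with the open disc $0$-neighborhood $\Omega'$, so $f_\psi(0) = \id_{\partial M}$ and $f_\psi(1) = \psi$ provided $1\cdot\psi$ still lies in $\Omega'$, which holds since $\Omega'$ is a disc about $0$ and $\psi \in \Omega'$. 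One subtlety worth a sentence: the curve $f_\psi$ must be defined on all of $[0,1]$ for $\Evol$ to apply, and convexity of the disc $\Omega'$ guarantees $t\psi \in \Omega'$ for all $t \in [0,1]$, so this is fine; possibly one shrinks $\Omega$ once more at the outset so that $G$ in \eqref{curve:bdr:primitive} is genuinely defined on $\Omega \times [0,1]$.

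Finally, the normalization is immediate: for $\psi = \id_{\partial M}$ we have (in the chart) $\psi = 0$, so $f_{\id_{\partial M}}(t) = 0$ is the constant curve at $\id_{\partial M}$, hence $\delta^r(f_{\id_{\partial M}}) \equiv 0$, hence $G(\id_{\partial M}, \cdot) \equiv 0$, hence $(\theta \circ G)^\vee(\id_{\partial M})$ is the zero time-dependent vector field, whose evolution is the constant curve at $\id_M$; evaluating at $1$ gives $k(\id_{\partial M}) = \id_M$. Assembling these three points — smoothness from the pre-proposition discussion plus regularity of $\Diff(M)$, the section identity from the $\theta$-construction and the flow-restriction property, and the normalization from $f_{\id} \equiv 0$ — completes the proof. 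The only real obstacle is bookkeeping the identifications (the chart $\kappa$, the $\wedge/\vee$ exponential-law adjunctions, and restriction to $\partial M$) carefully enough that the chain of equalities for $R \circ k$ is rigorous; no new analytic input is needed beyond what Lemmas~\ref{lem:ext_operator}, \ref{la:diff_ext} and \ref{lem:diffgp} already provide.
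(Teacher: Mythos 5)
Your proposal is correct and follows essentially the same route as the paper's proof: smoothness of $k$ from smoothness of $(\theta\circ G)^\vee$ and regularity of $\Diff(M)$, the section identity $R\circ k=\id_\Omega$ by unfolding the definitions and invoking Lemma~\ref{la:diff_ext}, and the normalization from the chart $\kappa$ sending $\id_{\partial M}$ to $0$ and $\Evol$ sending the zero field to $\id_M$. Your extra remark that the disc (star-shaped) property of $\Omega'$ guarantees $f_\psi(t)=t\psi$ stays in $\Omega'$ for all $t\in[0,1]$ is a worthwhile clarification of a point the paper leaves implicit.
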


\begin{proof}
We have already seen that $(\theta\circ G)^\vee$ is smooth. As $\Diff(M)$ is regular, $\Evol$ is smooth and thus $k$ is smooth as a composition of smooth maps and its domain~$\Omega$ is an identity neighbourhood. Unfolding the definition of $k$ in \eqref{splitting:section} and \eqref{curve:bdr:primitive}, we obtain directly from Lemma \ref{la:diff_ext} that $R \circ k = \id_\Omega$. The chart $\kappa$ used in the construction of the section $k$ send $\id_{\partial M}$ to the zero section, and the evolution sends the zero element to $\id_M$. Thus $k(\id_{\partial M}) = \id_M$.
\end{proof}

Note that the reason we only obtain a local section in Proposition \ref{prop:ex:smooth-sect} is that we needed to select an identity neighbourhood in which we could smoothly associate curves starting at the identity and ending in elements of the neighbourhood.

\begin{corollary} \label{cor:Product}
The short exact sequence \eqref{eq:sequence} of infinite-dimensional Lie groups admits sections. If $k \colon \Omega \rightarrow \Diff_0 (M)$ is a smooth section of $R$ with $k(\id_{\partial M}) = \id_M$, then for every open identity neighbourhood $U \subseteq \Diffpid (M)$, the set 
$$  k(\Omega) \cdot U \coloneq \{g= k(\omega) \cdot u \mid u \in U, \omega \in \Omega\}$$ is an open identity neighbourhood in $\Diff(M)$.
\end{corollary}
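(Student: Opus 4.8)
The plan is to deduce the corollary from the general facts about short exact sequences recorded in the definition preceding it, using the explicit section $k$ from Proposition~\ref{prop:ex:smooth-sect} together with the fact that $\Diffpid(M)$ is a closed Lie subgroup (Lemma~\ref{la:subgroup}). First I would note that the existence of a section is exactly Proposition~\ref{prop:ex:smooth-sect}, provided we also check that $\iota\colon\Diffpid(M)\to\Diff(M)$ is initial; but this is immediate since $\Diffpid(M)$ was shown to be a \emph{closed submanifold} of $\Diff(M)$ in Lemma~\ref{la:subgroup}, and a map into a closed submanifold is smooth iff it is smooth into the ambient manifold. By the remark in the definition, admitting local sections is equivalent to admitting local retractions, so there is a smooth retraction $r\colon U_0\to\Diffpid(M)$ on some open identity neighborhood $U_0\subseteq\Diff(M)$, and on $U_0$ one has the factorisation $h=k(R(h))\cdot\iota(r(h))$.

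For the second assertion I would argue that the map
$$\mu\colon \Omega\times\Diffpid(M)\longrightarrow \Diff(M),\qquad (\omega,u)\mapsto k(\omega)\cdot u$$
is smooth (composition of $k$, the inclusion, and the group multiplication of the Lie group $\Diff(M)$) and has $\mu(\id_{\partial M},\id_M)=\id_M$ since $k(\id_{\partial M})=\id_M$. The key point is that $\mu$ is, locally near the identity, a diffeomorphism onto an open neighborhood: this is precisely the content of the remark that $H$ is locally diffeomorphic to $G\times K$ via $(r,\pi)$ with local inverse $(\iota\circ\mathrm{pr}_1)\cdot(s\circ\mathrm{pr}_2)$. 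Concretely, shrinking $\Omega$ and choosing $U_0$ small enough, the map $(r,R)\colon U_0\to\Diffpid(M)\times\Omega$ is smooth with smooth inverse $\mu$, so $\mu$ restricts to a diffeomorphism between an open neighborhood of $(\id_M,\id_{\partial M})$ and the open set $U_0$. Hence $k(\Omega)\cdot U$ contains an open identity neighborhood for any open $U\subseteq\Diffpid(M)$, because it contains $\mu$ of an open neighborhood of $(\id_{\partial M},\id_M)$ inside $\Omega\times U$.

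The one genuine subtlety — the step I expect to be the main obstacle — is upgrading "$k(\Omega)\cdot U$ contains an identity neighborhood" to "$k(\Omega)\cdot U$ \emph{is} open". For this I would invoke translation: for a fixed $\omega_0\in\Omega$ and $u_0\in U$, a generic element $g=k(\omega_0)u_0$ of $k(\Omega)\cdot U$ has an open neighborhood of the form $k(\omega_0)\cdot W$ with $W$ an open identity neighborhood contained in $U\cdot u_0$... but this does not obviously land back in $k(\Omega)\cdot U$ since $k$ is only a section, not a homomorphism. The clean fix is to use the local-section property directly: since $(r,R)$ is a diffeomorphism from $U_0$ onto $\Omega'\times U_0'$ for suitable open sets, and left translation by $k(\omega)$ is a diffeomorphism of $\Diff(M)$, one shows $k(\Omega)\cdot U=\bigcup_{\omega\in\Omega} k(\omega)\cdot U$ is open as a union of the open sets $k(\omega)\cdot U$ — each of which is open because left translation is a homeomorphism and $U$ is open in the \emph{closed subgroup} $\Diffpid(M)$; wait, $U$ is open in $\Diffpid(M)$, not in $\Diff(M)$, so $k(\omega)\cdot U$ need not be open in $\Diff(M)$. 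The correct resolution, which I would write out carefully, is that $U_0\cap k(\Omega)\cdot U = \mu\big((R,r)(U_0)\cap(\Omega\times U)\big)$ is open, and then translating by arbitrary $k(\omega_0)u_0$ and using that multiplication maps are diffeomorphisms shows every point of $k(\Omega)\cdot U$ has a neighborhood inside it — the point being that the set $k(\Omega)\cdot U$ is right-invariant under $U$ and its image under $R$ is all of $\Omega$, so one reduces to the identity case handled above.
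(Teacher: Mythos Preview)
Your strategy matches the paper's: both reduce to the standard bundle picture for a short exact sequence with a local section, and the paper in fact just cites \cite[Lemma 5.5 (c)]{bauer2023regularity} for the ``standard calculation''. The ingredients you assemble --- the smooth section $k$ from Proposition~\ref{prop:ex:smooth-sect}, the closed-submanifold (hence initial) property of $\Diffpid(M)$ from Lemma~\ref{la:subgroup}, and the multiplication map $\mu(\omega,u)=k(\omega)u$ --- are exactly the right ones.

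Where your write-up goes wrong is in the last paragraph. You restrict the retraction $r$ to a small neighborhood $U_0$ and then try to propagate openness by translation, asserting along the way that ``$k(\Omega)\cdot U$ is right-invariant under $U$''. That claim is false: for $u,u'\in U$ one would need $uu'\in U$, which an arbitrary open identity neighborhood need not satisfy. The translation argument as sketched therefore does not close.

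The clean fix, which is what the ``standard calculation'' amounts to, is to notice that the retraction is \emph{globally} defined on all of $R^{-1}(\Omega)$, not just on a small $U_0$: set
\[
r\colon R^{-1}(\Omega)\longrightarrow \Diffpid(M),\qquad r(h)=k(R(h))^{-1}\cdot h.
\]
This is smooth because $R$, $k$, inversion and multiplication are smooth and the image lies in the closed submanifold $\Diffpid(M)$. Then $(R,r)$ is a smooth two-sided inverse of $\mu$ on $R^{-1}(\Omega)$, so $\mu\colon \Omega\times\Diffpid(M)\to R^{-1}(\Omega)$ is a diffeomorphism onto the open set $R^{-1}(\Omega)$. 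Consequently $k(\Omega)\cdot U=\mu(\Omega\times U)$ is open in $R^{-1}(\Omega)$, hence in $\Diff(M)$, and contains $\id_M=\mu(\id_{\partial M},\id_M)$. This replaces your entire final paragraph and avoids any translation gymnastics.
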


\begin{proof}
From \Cref{prop:ex:smooth-sect} it follows that the short exact sequence admits local sections in the sense that we can always find an identity neighbourhood on which a section of $R$ is smooth. The statement for the identity neighbourhood follows now by standard calculations for group extensions. The reader may for example consult \cite[Lemma 5.5 (c)]{bauer2023regularity}.
\end{proof}

In general we can not hope that the existence of local sections for the sequence \eqref{eq:sequence} can be improved to a splitting of the sequence. First of all, our results do not produce a global section (by construction of $\Omega$ in Section~\ref{setup:localisation} which depends on a suitable chart and is thus not easily globalised). Secondly, the extension operator $\theta$ in Lemma \ref{lem:ext_operator} multiplies by a cut-off function which leads to $\theta$ not being a Lie algebra morphism. We quantify the failure of $k$ to preserve the multiplication in the next section.

\subsection{The section as a (local) group morphism}
Note that the construction of the extension operator $\theta$ in Lemma \ref{lem:ext_operator} respects composition of diffeomorphisms in some neighbourhood of the identity. Recall the notation introduced in Section \ref{subsect_collar_thick}, where $N_s = h(\partial M \times [0,s])$ is a submanifold with smooth boundary of $M$ such that $\partial N_s = \partial M \sqcup h(\partial M \times \{s\})$. Let us write $h_s \coloneq h|_{\partial M \times [0,s]}^{N_s}$ for the diffeomorphism induced by the collar on $N_s$. We obtain the following multiplication preserving extension to diffeomorphisms of $N_s$: 

\begin{lemma}\label{lem:multiplicative}
Let $\Omega$ be the identity neighbourhood from Section~\ref{setup:localisation}, then 
$$\Theta_s \colon \Omega \rightarrow \Diff (N_s),\quad \psi \mapsto \Evol (Th_s \circ (\delta^r (f_\psi),\mathbf{0}) \circ h_s^{-1})(1),$$
where $\Evol$ is the evolution in $\Diff (N_s)$ makes sense, is smooth and satisfies:
\begin{enumerate}[\rm (1)]
\item $\Theta_s (\psi)|_{\partial M} = \psi$
\item $\Theta_s$ is a morphism of local Lie groups, i.e. $\Theta_s (\phi)\circ \Theta_s (\psi) = \Theta_s (\phi \circ \psi)$ and $\Theta_s(\varphi^{-1}) = \Theta_s (\varphi)^{-1}$, whenever both sides of the identities are defined. 
\end{enumerate}
\end{lemma}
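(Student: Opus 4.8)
The plan is to exploit the collar diffeomorphism $h_s$ to transport everything to the product $\partial M \times [0,s]$, where the construction becomes transparent. On this product, the extension sends a vector field $X \in \VF(\partial M)$ to $(X, \mathbf{0})$, i.e.\ it acts as $X$ in the $\partial M$-directions and does nothing in the $[0,s]$-direction. Consequently the time-dependent vector field $Th_s \circ (\delta^r(f_\psi), \mathbf{0}) \circ h_s^{-1}$ on $N_s$ corresponds under $h_s$ to $(\delta^r(f_\psi), \mathbf{0})$ on $\partial M \times [0,s]$, and its flow is $\mathrm{Fl}^{\delta^r(f_\psi)} \times \mathrm{id}_{[0,s]}$. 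Hence $\Theta_s(\psi) = h_s \circ (c_{f_\psi}(1) \times \mathrm{id}_{[0,s]}) \circ h_s^{-1}$, where $c_{f_\psi}$ is the solution of the Lie-type equation with right logarithmic derivative $\delta^r(f_\psi)$, so that $c_{f_\psi}(1) = f_\psi(1) = \psi$ (using that $\Evol$ inverts $\delta^r$ on curves starting at the identity, exactly as in Lemma~\ref{la:diff_ext}). This identification immediately makes $\Theta_s$ well-defined with image in $\Diff(N_s)$, and smoothness follows as in Proposition~\ref{prop:ex:smooth-sect}: the map $\psi \mapsto f_\psi$ into $C^\infty([0,1],\Omega)$ is smooth by the exponential law, $\delta^r$ is smooth, the product-with-$\mathbf{0}$ and conjugation by the fixed diffeomorphism $Th_s, h_s^{-1}$ are continuous linear, and $\Evol$ on the regular Lie group $\Diff(N_s)$ is smooth.

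For (1): restricting $\Theta_s(\psi) = h_s \circ (\psi \times \mathrm{id}_{[0,s]}) \circ h_s^{-1}$ to $\partial M = h_s(\partial M \times \{0\})$ gives $h_s|_{\partial M \times \{0\}} \circ \psi \circ (h_s|_{\partial M \times \{0\}})^{-1}$, which under the identification $h_s|_{\partial M \times \{0\}}^{\partial M} = \mathrm{id}_{\partial M}$ (part of the collar being a collar) is just $\psi$. For (2): by the product description, $\Theta_s(\phi) \circ \Theta_s(\psi) = h_s \circ ((\phi \circ \psi) \times \mathrm{id}_{[0,s]}) \circ h_s^{-1} = \Theta_s(\phi \circ \psi)$ whenever $\phi, \psi$ and $\phi \circ \psi$ all lie in $\Omega$ (which is exactly when both sides are defined); the inverse identity follows the same way, or from the composition identity with $\phi = \psi^{-1}$. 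Thus $\Theta_s$ is a morphism of local Lie groups.

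The one genuine point requiring care, and the main obstacle, is justifying the identification $\Evol(Th_s \circ (\delta^r(f_\psi),\mathbf{0}) \circ h_s^{-1}) = h_s \circ \mathrm{Fl}^{\delta^r(f_\psi) \times \mathbf 0} \circ h_s^{-1}$, i.e.\ that flows of vector fields are natural under the diffeomorphism $h_s$ and that the flow of a product vector field $(Y, \mathbf{0})$ on $\partial M \times [0,s]$ is $\mathrm{Fl}^Y \times \mathrm{id}$. This is the classical naturality of flows (e.g.\ \cite[Chapter 9]{Lee13}), but one must check it respects the fact that $N_s$ is a manifold with boundary and that the relevant vector field is tangent to $\partial N_s$ — here $(\delta^r(f_\psi),\mathbf{0})$ is tangent to both boundary components $\partial M \times \{0\}$ and $\partial M \times \{s\}$, so its flow is a well-defined element of $\Diff(N_s)$ by the facts recalled in Section~\ref{subsect:prelimSMB}. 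Once this is in place the rest is bookkeeping. A minor auxiliary point is to be explicit about the domain of definition: all identities in (2) hold on the open set of pairs $(\phi,\psi) \in \Omega \times \Omega$ with $\phi\circ\psi \in \Omega$, which we take as the meaning of "whenever both sides are defined".
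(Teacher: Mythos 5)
Your proposal is correct and follows essentially the same route as the paper: both reduce everything via the collar $h_s$ to the product $\partial M\times[0,s]$, identify $\Theta_s(\psi)=h_s\circ(\psi\times\id_{[0,s]})\circ h_s^{-1}$ by the fact that the evolution of $(\delta^r(f_\psi),\mathbf 0)$ is $\Evol(\delta^r(f_\psi))\times\id_{[0,s]}$, and then read off (1) and (2) from this product description. The only cosmetic difference is that the paper phrases the transport step as naturality of $\Evol$ under the Lie group isomorphism $E_s\colon\Diff(\partial M\times[0,s])\to\Diff(N_s)$, whereas you invoke the equivalent classical naturality of flows under diffeomorphisms.
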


\begin{proof}
 Since $\delta^r (f_\psi) (t) \in \VF(\partial M)$ and $h_s$ identifies $N_s$ with $\partial M \times [0,s]$, the vector field $Th_s \circ (\delta^r (f_\psi),\mathbf{0}) \circ h_s^{-1}$ on $N_s$ is tangent to the boundary of $N_s$, i.e.\ it is an element in $\VF(N_s;\partial N_s)$. This shows that $\Theta_s$ makes sense and arguments as in the proof of Lemma \ref{lem:ext_operator} and in Section \ref{setup:localisation} show that $\Theta$ is smooth (the argument being even simpler as we do not need the multiplication with the cutoff and subsequent extension). Property (1) follows now exactly as in the proof of Proposition \ref{prop:ex:smooth-sect}. For property (2) let us consider the Lie group isomorphism 
 $$E_s \colon \Diff (\partial M \times [0,s]) \rightarrow \Diff (N_s),\quad \phi \mapsto h_s \circ \phi \circ h^{-1}_s.$$ 
 Applying the Lie functor we obtain the Lie algebra isomorphism 
 $$e_s \coloneq \Lf (E_s) \colon \VF(\partial M) \rightarrow \VF(N_s;\partial N_s),\quad X \mapsto Th_s \circ X \circ h_s^{-1}.$$
 Now let us denote the time $1$ evolution of the Lie group $\Diff (\partial M \times [0,s])$ by $\evol$, then the naturality of the evolution (see \cite[Exercise 3.3.5]{schmeding_2022}) yields for the pushforward $(e_s)_\ast$ of $e_s$ on the space of smooth curves:
 \begin{equation}\label{eq:theta_rewrite}
 \Theta_s = \Evol \circ (e_s)_\ast \circ (\delta^r , \mathbf{0})(1) = E_s \circ \evol (\delta^r,\mathbf{0}).\end{equation}
 Observe that $\evol$ is associating to every (time dependent) vector field its time $1$ flow. This implies for the product vector fields of type $X \times \mathbf{0}$ that 
 \begin{equation}\label{eq:evol_rewrite}
 \evol (\delta^r (f_\psi),0)=(\Evol (\delta^r (f_\psi)(1),\id_{[0,s]})=(\psi,\id_{[0,s]})
 \text{ for all } \psi \in \Omega,\end{equation}
 where the last identity follows from the choice of $f_\psi$. Now since $E_s$ is a Lie group morphism the identities \eqref{eq:evol_rewrite} and \eqref{eq:theta_rewrite} combined yield (2). 
\end{proof}

In Lemma \ref{lem:multiplicative} we exploited that we did not need to multiply by a cutoff function. However, we can generalise the argument to quantify how much the section $k$ constructed in Proposition~\ref{prop:ex:smooth-sect} deviates from being a local group morphism. The key is again to consider a smooth cutoff function which is constant equal to $1$ in a collar neighbourhood of the boundary. As preparation, denote by $\Delta \Omega \coloneq \{(\phi,\psi) \in \Omega \times \Omega \mid \phi \circ \psi \in \Omega\}$. Note that since $\Omega$ is an open identity neighbourhood, $\Delta \Omega$ contains an open neighbourhood of $(\id_{\partial M},\id_{\partial M})$.
For an element $\phi \in \Diff (M)$, we let $\supp \phi$ be the closure of all $m \in M$ such that $\phi(m) \neq m$.

\begin{lemma}
For the short exact sequence \eqref{eq:sequence} we can choose a normalised smooth section $k \colon \Omega \rightarrow \Diff (M)$ of the restriction map $R$ such that 
\begin{align}\label{eq:almost_gp_mor}
 k(\phi \circ \psi) = k(\phi)\circ k(\psi) \circ \varepsilon (\phi,\psi) ,\quad \forall (\phi,\psi) \in \Delta \Omega
\end{align}
where $\varepsilon_k (\phi,\psi) \in \Diff (M)$ with $\supp \varepsilon_k (\phi,\psi) \subseteq M \setminus h(\partial M \times [0,s) )$. Finally, $\varepsilon_k$ is a smooth map in some neighbourhood of $(\id_M,\id_M) \in \Diff (M)^2$.
\end{lemma}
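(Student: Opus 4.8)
The plan is to modify the construction of the section $k$ from Proposition~\ref{prop:ex:smooth-sect} so that, instead of multiplying the extended vector field by the fixed bump function $\chi$ after evolving on $\partial M$, we \emph{first} evolve on the product collar $\partial M \times [0,r]$ using the trivially extended vector field $(\delta^r(f_\psi),\mathbf{0})$, and only then transport the resulting diffeomorphism of the collar to $M$ and cut it off. Concretely, let $h \colon \partial M \times [0,1) \to M$ be the collar, let $\chi$ be the bump function from \eqref{ChiFunction} (constant $1$ on $N_s$, supported in $N_{s'}$), and define on the collar the time-$1$ evolution $\Psi(\psi) \coloneq \Evol(Th \circ (\delta^r(f_\psi),\mathbf{0}) \circ h^{-1})(1) \in \Diff(N_r)$ as in Lemma~\ref{lem:multiplicative} but on $N_r$ rather than $N_s$. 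Because the generating vector field is of product type $X \times \mathbf{0}$, the flow is of the form $(\text{flow on }\partial M) \times \id_{[0,r]}$, so $\Psi(\psi)(h(b,t)) = h(\psi_t(b), t)$ where $\psi_t$ is the flow on the boundary; in particular $\Psi(\psi)$ is the identity near $dN_r = h(\partial M \times \{r\})$ only if the boundary flow is trivial, which it is not, so this naive version does not glue. The fix is the standard one: I would instead define $k(\psi)$ to act on $h(b,t)$ by $h(\psi_{C(t)}(b), t)$ with $C$ the cut-off of Section~\ref{subsect_collar_thick} (so $C \equiv 1$ near $t=0$, $C \equiv 0$ for $t \ge s'$), and by the identity off $N_{s'}$. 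Smoothness in $\psi$ follows by the exponential law as in Proposition~\ref{prop:ex:smooth-sect}, since $(b,t) \mapsto \psi_{C(t)}(b)$ is smooth jointly and agrees with $\id$ for $t$ near $r$; that $k(\psi)$ is a diffeomorphism of $M$ follows as in Section~\ref{sec:diffgp}; and $k(\psi)|_{\partial M} = \psi_1 = \psi$ because $C(0)=1$, so $k$ is a normalized smooth local section of $R$.

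Next I would compute the failure of multiplicativity. For $(\phi,\psi) \in \Delta\Omega$ we have on the collar, for each fixed $t$,
\begin{equation}\label{eq:eps_def}
 k(\phi)\circ k(\psi)\big(h(b,t)\big) = h\big((\phi_{C(t)}\circ\psi_{C(t)})(b),\, t\big),
 \qquad
 k(\phi\circ\psi)\big(h(b,t)\big) = h\big((\phi\circ\psi)_{C(t)}(b),\, t\big).
\end{equation}
Here $\psi_t$ denotes the boundary flow with $\psi_1 = \psi$, namely $\psi_t = \Evol(\delta^r(f_\psi))(t)$, which by the choice $f_\psi(t) = t\psi$ (in the chart $\kappa$) satisfies the key rescaling identity $\psi_t = \kappa^{-1}(t\,\kappa(\psi))$. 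The discrepancy between the two expressions in \eqref{eq:eps_def} is governed by the failure of $t \mapsto \kappa^{-1}(t\kappa(\cdot))$ to be a one-parameter subgroup, i.e. by the nonlinearity of group multiplication in the chart $\kappa$, and it vanishes for $t$ with $C(t)=1$, that is, on $N_s = h(\partial M \times [0,s])$. Define $\varepsilon_k(\phi,\psi) \coloneq k(\phi\circ\psi)^{-1}\circ k(\phi)\circ k(\psi) \in \Diff(M)$. From \eqref{eq:eps_def} this map equals $\id$ at every point $h(b,t)$ with $C(t)=1$, hence on the open set $h(\partial M \times [0,s))$, so $\supp \varepsilon_k(\phi,\psi) \subseteq M \setminus h(\partial M \times [0,s))$. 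Moreover $R(\varepsilon_k(\phi,\psi)) = R(k(\phi\circ\psi))^{-1}\circ R(k(\phi))\circ R(k(\psi)) = (\phi\circ\psi)^{-1}\circ\phi\circ\psi = \id_{\partial M}$, so $\varepsilon_k(\phi,\psi) \in \Diffpid(M)$ as well, consistent with the support claim. Rearranging gives \eqref{eq:almost_gp_mor} with $\varepsilon = \varepsilon_k$.

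Finally, smoothness of $\varepsilon_k$ near $(\id_M,\id_M)$ is immediate: $(\phi,\psi)\mapsto k(\phi\circ\psi)^{-1}\circ k(\phi)\circ k(\psi)$ is a composition of the smooth section $k$ (on the identity neighborhood $\Omega$ and its preimage under multiplication, which contains a neighborhood of $(\id,\id)$ by openness of $\Delta\Omega$), the smooth multiplication map of $\Diff(\partial M)$, and the smooth inversion and multiplication of $\Diff(M)$; hence $\varepsilon_k$ is smooth there. The main obstacle I anticipate is purely bookkeeping rather than conceptual: one must verify carefully that the reparametrized map $h(b,t)\mapsto h(\psi_{C(t)}(b),t)$, extended by the identity, is genuinely a smooth diffeomorphism of $M$ (smoothness across $dN_{s'}$ and the collar interface), and that the assignment $\psi \mapsto k(\psi)$ is smooth as a map into $\Diff(M) \subseteq C^\infty(M, D(M))$; both follow from the exponential law \cite[Lemma A.4]{AaGaS20} and the regularity of $\Diff(M)$ exactly as in Lemmas~\ref{lem:ext_operator} and~\ref{lem:multiplicative} and Proposition~\ref{prop:ex:smooth-sect}, but the gluing across the cut-off region is where care is needed. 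Everything else—the support estimate and smoothness of $\varepsilon_k$—then drops out of the explicit collar formula \eqref{eq:eps_def}.
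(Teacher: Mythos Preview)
Your argument is correct apart from a harmless bookkeeping slip: with your definition $\varepsilon_k(\phi,\psi)=k(\phi\circ\psi)^{-1}\circ k(\phi)\circ k(\psi)$ one gets $k(\phi\circ\psi)=k(\phi)\circ k(\psi)\circ\varepsilon_k^{-1}$, so $\varepsilon=\varepsilon_k^{-1}$ rather than $\varepsilon_k$; this does not affect the support or smoothness conclusions.

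The paper's route is different and shorter. It does \emph{not} build a new section. It takes the $k$ already produced in Proposition~\ref{prop:ex:smooth-sect}, sets $\varepsilon(\phi,\psi)=k(\psi)^{-1}\circ k(\phi)^{-1}\circ k(\phi\circ\psi)$, and then observes that for $x\in N_s$ one has $\chi(x)=1$, so the time-dependent vector field $\theta(\delta^r(f_\psi))$ defining $k$ coincides on $N_s$ with the one defining $\Theta_s$ in Lemma~\ref{lem:multiplicative}. Since that field has zero $t$-component, $N_s$ is flow-invariant and hence $k(\varphi)|_{N_s}=\Theta_s(\varphi)$. Lemma~\ref{lem:multiplicative}(2) then gives $\varepsilon(\phi,\psi)|_{N_s}=\id$ immediately. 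Smoothness of $\varepsilon$ near the identity is the same composition-of-smooth-maps argument you give.

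Your approach instead manufactures an explicit alternative section $k(\psi)(h(b,t))=h(f_\psi(C(t))(b),t)$ and checks multiplicativity on $N_s$ by hand. This is legitimate, but it costs you the extra verification that your new $k$ is a smooth $\Diff(M)$-valued section (the gluing issue you yourself flag), whereas the paper's $k$ already has that established. Note also that your $k$ and the paper's $k$ genuinely differ on the transition region $N_{s'}\setminus N_s$: the time-$1$ flow of $\tau\mapsto C(t)\,\delta^r(f_\psi)(\tau)$ is not in general $f_\psi(C(t))$, since $\delta^r(f_\psi)$ is non-autonomous. They do agree on $N_s$ and outside $N_{s'}$, which is exactly why both yield the same support estimate, and why the paper's appeal to Lemma~\ref{lem:multiplicative} and your explicit computation are really the same observation viewed from two sides.
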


\begin{proof}
We shall prove the statement for the section $k$ which was constructed in Proposition \ref{prop:ex:smooth-sect}.
Let us first define $\varepsilon \colon \Delta \Omega \rightarrow \Diff (M)$ via the formula
\begin{align}\label{eq:errorterm}
\varepsilon (\phi,\psi) \coloneq k(\psi)^{-1} \circ k(\phi)^{-1}\circ k(\phi\circ \psi)
\end{align}
Then $\varepsilon_k$ makes sense and trivially satisfies \eqref{eq:almost_gp_mor}. By smoothness of the Lie group operations we can choose an identity neighbourhood $O \subseteq \Omega \subseteq \Diff (\partial M)$, such that $O \times O \subseteq \Delta \Omega$. On this neighbourhood \eqref{eq:errorterm} clearly defines a smooth map as $k$ is smooth on its domain and composition and inversion is smooth in the respective diffeomorphism groups.

It remains to prove that for any pair $(\phi,\psi) \in \Delta \Omega$ the support of $\varepsilon (\phi,\psi)$ is contained in $M\setminus h(\partial M \times [0,s))$. For this it suffices to note that for $x \in N_s$ the cutoff function from \eqref{ChiFunction} satifies $\chi (x) = 1$. So for a vector field $F \in \VF (\partial M)$ we have with the notation of Lemma \ref{lem:multiplicative} the identity 
$$\theta (F)(x)=\hat{F}(x) = Th_s \circ (F,\mathbf{0})\circ h_s^{-1}(x), \quad \forall x \in N_s.$$
Thus for $x \in N_s$ we obtain $k(\varphi)(x)=\Theta_s (\varphi) (x)$. By Lemma \ref{lem:multiplicative} $\Theta_s$ is a morphism of local Lie groups, hence for $(\varphi ,\psi) \in \Delta \Omega$ and $x \in N_s$ we obtain
\begin{align*}
\varepsilon (\varphi, \psi) (x) &= k(\psi)^{-1} \circ k(\phi)^{-1}\circ k(\phi\circ \psi) (x)\\ &= k(\psi)^{-1} ( k(\phi)^{-1}(\Theta_s(\phi\circ \psi)(x)) \\ &= \Theta_s(\psi)^{-1}(\Theta_s(\phi)^{-1}(\Theta_s(\phi)\Theta_s (\psi)(x))=x
\end{align*}
So only $x$ in the boundary of $N_s$ can be contained in the support of $\varepsilon (\varphi,\psi)$. 
\end{proof}

\begin{remark}
Let $\tilde{R}\colon \Diff (M) \rightarrow \Diff(\partial M)$ be the restriction map. As a consequence of Lemma \ref{eq:errorterm}, one can show that for every $\tilde{R}(\Phi) = \phi$ there exists a local section $k_\phi \colon \phi \Omega \rightarrow \Diff (M), k_\phi (\psi) \coloneq \Phi \circ k(\phi^{-1}\circ \psi)$. Hence the splitting of \eqref{eq:sequence} generalises to a splitting where the right hand side of the sequence is replaced by the image of the restriction map. It is however not clear to the authors how to describe the image of $\tilde{R}$ and whether the restriction is surjective onto $\Diff (\partial M)$.
\end{remark}

\section{Proof of the controllability results}\label{sect:controllability}
\subsection{Why the results are not geometric} \label{sec:NotGeometric}
We will first explain how Theorem~\ref{th:control} does not depend on the choice of geometry. We can therefore modify the Riemannian metric $g$ without changing the controllability properties.

\begin{proposition}
Let $(M,g)$ be a Riemannian manifold with boundary, outward normal vector field $\nu$ and Levi-Civita connection $\nabla$. Define $\tilde g$ as another Riemannian metric on $M$ and let $\tilde \nu$ be its outward normal vector field. Let $\tilde \nabla$ be any other affine connection. Finally, let $Z$ be a vector field vanishing on the boundary and $x \in \partial M$ be an arbitrary point. Then
$$\langle \nabla_{\nu} Z, \nu \rangle_g(x) \neq 0,$$
if and only if
$$\langle \tilde \nabla_{\tilde \nu} Z, \tilde \nu \rangle_{\tilde g}(x) \neq 0,$$
\end{proposition}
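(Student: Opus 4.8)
\emph{Proof strategy.} The plan is to attach to $Z$ and the point $x$ a single scalar $\beta$ depending on neither the connection nor the metric, and to show that each of the two expressions in the statement equals $\beta$ times a strictly positive number.

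First I would note that, since any two affine connections on $TM$ differ by a $(1,2)$-tensor and $Z(x)=0$, the value $(\nabla_w Z)(x)\in T_xM$ depends only on $w\in T_xM$, not on the choice of affine connection; in particular $\nabla_\nu Z(x)=\tilde\nabla_\nu Z(x)$, and likewise with $\tilde\nu$ in place of $\nu$. Second, $(\nabla_w Z)(x)=0$ for every $w\in T_x\partial M$: picking a curve $\gamma$ through $x$ with $\dot\gamma(0)=w$ that stays inside $\partial M$, the vector $(\nabla_w Z)(x)$ is the covariant derivative along $\gamma$ of $t\mapsto Z(\gamma(t))$, which vanishes identically because $Z|_{\partial M}=0$. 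Combining these two facts, $w\mapsto(\nabla_w Z)(x)$ factors through a well-defined linear map $D_x\colon T_xM/T_x\partial M\to T_xM$ independent of all auxiliary choices.

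Since $\dim(T_xM/T_x\partial M)=1$, post-composing $D_x$ with the projection $T_xM\to T_xM/T_x\partial M$ gives an endomorphism of a one-dimensional vector space, i.e.\ multiplication by a scalar $\beta=\beta(Z,x)\in\R$; equivalently, for any $n\in T_xM$ transverse to $\partial M$ one has $(\nabla_n Z)(x)=\beta\,n+w_n$ with $w_n\in T_x\partial M$. Taking $n=\nu$ and using that $\nu$ is $g$-orthogonal to $T_x\partial M$ gives $\langle\nabla_\nu Z,\nu\rangle_g(x)=\beta\,|\nu|_g^2(x)$; the identical computation with $\tilde\nabla,\tilde\nu,\tilde g$ — invoking the connection-independence above to identify $\tilde\nabla_{\tilde\nu}Z(x)$ with $D_x$ applied to the class of $\tilde\nu$ — gives $\langle\tilde\nabla_{\tilde\nu}Z,\tilde\nu\rangle_{\tilde g}(x)=\beta\,|\tilde\nu|_{\tilde g}^2(x)$. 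Since $|\nu|_g^2(x)>0$ and $|\tilde\nu|_{\tilde g}^2(x)>0$, both expressions vanish precisely when $\beta=0$, which is the asserted equivalence.

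I do not expect a real obstacle. The only delicate points are the tensoriality bookkeeping for the difference of two connections and checking that every covariant derivative occurring can be computed along a curve through $x$ lying inside $M$ (inside $\partial M$ for directions tangent to the boundary), so that the presence of a boundary is harmless; alternatively one extends $Z$ to the double $DM$ and differentiates there.
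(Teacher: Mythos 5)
Your proof is correct and rests on exactly the same two observations as the paper's — that at a zero of $Z$ the covariant derivative is independent of the affine connection, and that derivatives of $Z$ in directions tangent to $\partial M$ vanish — so it is essentially the same argument, merely packaged invariantly via the quotient $T_xM/T_x\partial M$ rather than in coordinates. The paper instead computes in a boundary chart to show $\nabla_\nu Z(x)$ and $\tilde\nabla_{\tilde\nu}Z(x)$ are parallel and then notes that non-vanishing of the pairing with the outward normal is the metric-independent condition of non-tangency to the boundary; your exact formula $\langle\nabla_\nu Z,\nu\rangle_g(x)=\beta\,|\nu|_g^2(x)$ delivers the same conclusion.
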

\begin{proof}
We will show that in fact $\nabla_{\nu} Z(x)$ is parallel to $ \tilde \nabla_{\tilde \nu} Z(x)$. The proof will then follow from the fact that the inner product is non-zero relative to the outwards normal vector if and only if the other vector is not tangent to the boundary, regardless of the choice of metric.

Let us choose a coordinate system $(x_1, \dots, x_n)$ around the point $x$, identifying it with $0$, and defining it such that the boundary is $x_n = 0$ and the interior is $x_n >0$. Let us write $Z = \sum_{j=1}^n Z^j \partial_{j}$, $\nu = \sum_{j=1}^n \nu^j \partial_j$ and $\tilde \nu = \sum_{j=1}^n \tilde \nu^j \partial_j$. We remark that we must have $\nu^n <0$ and $\tilde \nu^n <0$ since they are outward normal vector fields. If $\Gamma_{ij}^k$ are the Christoffel-symbols of $\nabla$ in these coordinates, then
$$\nabla_{\nu} Z(0) = \sum_{j,k=1}^n \nu^j(0) \partial_j Z^k(0) \partial_k + \sum_{i,j,k=1}^n \nu^i(0) Z^j(0) \Gamma_{ij}^k \partial_k = \nu_n (0) \sum_{k=1}^n \partial_n Z^k(0) \partial_{k}.$$
Here we have used that since $Z$ vanish at $0$, then the Christoffel symbols, and hence choice of connection plays no role. Furthermore, all derivatives in the direction of the boundary will vanish since $Z$ is constantly zero there. Similarly
$$\tilde \nabla_{\tilde \nu} Z(0) = \tilde \nu_n (0) \sum_{k=1}^n \partial_n Z^k(0) \partial_{k}.$$
It finally follows that the vector fields are parallel, giving us the result.
\end{proof}

\subsection{Localisation lemma and proof of controllability} Let $M$ be a compact manifold with smooth boundary $\partial M$. If $U$ is an open set in $M$, then $C^\infty_{\str}(U,M)$ denotes smooth functions such that $f(U \cap \partial M) = f(U) \cap \partial M$. Our results then follow as a consequence of the following lemma.
\begin{lemma}[Localisation lemma] \label{lemma:nbh}
Let $\scrF$ be a collection of vector fields satisfying the assumptions of Theorem~\ref{th:control}. Then for any point $x \in M$, we have a neighbourhood $U$ of $x$ and an open set $\mathscr{U}$ in $C^\infty_{\str}(U, M)$ of the identity generated by~$\hat{\mathscr{G}}|_{U}$.
\end{lemma}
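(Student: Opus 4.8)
The plan is to reduce the global statement of Theorem~\ref{th:control} to a purely local one: show that in a well-chosen chart near each of the relevant points (a fixed interior point, and a dense set of boundary points), the pseudogroup generated by $\hat{\scrF}$ restricted to a neighborhood $U$ already contains an open identity neighborhood in $C^\infty_{\str}(U,M)$. This is the ``hard analytic core''; once it is known, transitivity of $\scrG$ (assumption (I)) will be used to spread the local surjectivity around $M$, and the group generated will be shown to be open, hence (being a subgroup of the connected group $\Diff(M)$) all of it. So in this lemma I only need to produce the local open set.

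First I would distinguish the interior case from the boundary case. For an interior point $x \in \intOP M$, assumption (I) says $\scrG$ acts transitively on $\intOP M$; choosing a small chart $U$ disjoint from $\partial M$, one is exactly in the situation of Agrachev--Caponigro \cite{Agrachev}, so I would invoke (a localized version of) their argument: the module $\hat{\scrF}|_U$ contains, up to relabeling, all of $C^\infty_c(U,\R^n)$-many vector fields because transitivity plus the $C^\infty(M)$-module structure lets one realize enough ``bump'' vector fields, and brackets/compositions of their flows fill out an open set in $\Diff_c(U)$. The real work is the boundary case: fix $x \in \partial M$ from the dense family in hypothesis (II), pick the boundary-flattening chart $(x_1,\dots,x_n)$ with $\partial M = \{x_n = 0\}$, $\intOP M = \{x_n > 0\}$ as in the previous subsection. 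Here I have two ingredients available: (i) by transitivity of $\scrG$ on the connected component of $\partial M$, the vector fields of $\scrF$ tangent to $\partial M$, together with the $C^\infty(M)$-module operation, generate ``enough'' vector fields tangent to $\{x_n=0\}$ to move points freely along and (via their interior parts) into the half-space; (ii) the special vector field $Z \in \hat\scrF$ from (II), which vanishes on $V \cap \partial M$ but has $\langle \nabla_\nu Z,\nu\rangle(x) \neq 0$, i.e. (by the computation in the previous proposition) $\partial_n Z^n(0) \neq 0$. The point of $Z$ is that it is the one ``transverse'' direction: its flow moves the boundary inward/outward to first order, which no vector field tangent to $\partial M$ can do, and this is precisely the missing generator needed to get an \emph{open} set in $C^\infty_{\str}(U,M)$ rather than just in the subgroup fixing the boundary setwise with trivial normal jet.

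Concretely, the key step I would carry out is: combine the tangential generators (from (I)) with $Z$ and its $C^\infty(M)$-module multiples $fZ$ to show that $\hat\scrF|_U$ spans, at every point of $U$, the full tangent space $T_yM$, \emph{and} that along $U \cap \partial M$ it contains vector fields whose normal component $\partial_n(\,\cdot\,)^n$ at the boundary is an arbitrary smooth function — using that $f \mapsto f Z$ scales $\partial_n Z^n$ by $f|_{\partial M}$ and that $f|_{\partial M}$ ranges over all of $C^\infty(U\cap\partial M)$ while $f|_{\partial M}$ together with tangential fields can be chosen freely. Once $\hat\scrF|_U$ is ``fat'' in this sense (a finitely-generated $C^\infty(U)$-submodule of $\VF(U;U\cap\partial M)$ that, together with its Lie brackets, is bracket-generating relative to the half-space structure), the Agrachev--Caponigro machinery — realizing compactly supported perturbations as finite products of flows, plus the inverse function theorem in the tame-Fréchet / convenient setting for $C^\infty_{\str}(U,M)$ — yields an open neighborhood $\scrU$ of $\id_U$ in $C^\infty_{\str}(U,M)$ inside the group generated by flows of $\hat\scrF|_U$. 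The main obstacle is making the boundary version of the Agrachev--Caponigro surjectivity argument precise: their original result lives on a boundaryless manifold, and here one must track that all the compactly supported vector fields produced are tangent to $\partial M$ (so their flows preserve $C^\infty_{\str}$), while the single field $Z$ supplies exactly the one normal degree of freedom that recovers surjectivity onto $C^\infty_{\str}(U,M)$; verifying that the dimension count / transversality works uniformly over the dense set of boundary points, using only the pointwise condition (II), is where the care is needed.
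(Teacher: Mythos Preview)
Your overall architecture (split into interior and boundary cases, invoke Agrachev--Caponigro at interior points, and at boundary points combine tangential generators coming from transitivity with the special field $Z$) matches the paper. However, your boundary argument has a genuine gap that the paper's proof handles by a trick you have not anticipated.

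The field $Z$ from hypothesis~(II) \emph{vanishes} on $V\cap\partial M$, so in particular $Z(x)=0$. You therefore cannot use $X_1,\dots,X_{n-1},Z$ as a frame at $x$ to which Lemma~\ref{lem:Xn} (Agrachev's decomposition $\psi = e^{f_1 X_1}\circ\cdots\circ e^{f_n X_n}$) applies, and your plan of taking $C^\infty(M)$-multiples $fZ$ does not help: every $fZ$ still vanishes at $x$. Your description of $Z$ as ``moving the boundary inward/outward to first order'' is correct at the level of the normal jet, but that is not what Lemma~\ref{lem:Xn} needs --- it needs a vector field that is actually nonzero at the point. The paper's device is to factor $Z = x_n \hat Z$ in the half-space chart, so that $\hat Z(0)\neq 0$ is transverse to $\partial H^n$; then extend everything across the boundary via Seeley and apply Lemma~\ref{lem:Xn} on the full $\mathbb{R}^n$ to the spanning system $X_1,\dots,X_{n-1},\varphi\hat Z$. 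The catch is that $\hat Z\notin\hat\scrF$ (it is not tangent to $\partial M$), so one must show a posteriori that for $\phi\in C^\infty_{\str}(U,H^n)$ the coefficient $\hat g$ in the factor $e^{\hat g\varphi\hat Z}$ is forced to satisfy $\hat g = x_n g$, whence $\hat g\varphi\hat Z = g\varphi Z\in\hat\scrF$. That forcing step is the content of Lemma~\ref{lemma:varphi}, and the cut-off $\varphi$ is there precisely to rule out trajectories of $\hat g\varphi\hat Z$ that leave and later return to $\partial H^n$; without it the implication ``$e^{\hat g\varphi\hat Z}$ preserves $\partial H^n\Rightarrow \hat g|_{\partial H^n}=0$'' can fail.

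Two smaller points. First, no tame--Fr\'echet or Nash--Moser inverse function theorem is used (or needed); Lemma~\ref{lem:Xn} is an elementary finite-composition result already available from \cite{Agrachev}, and the boundary case is reduced to it by the $\hat Z$-trick above. Second, the proof does not pass through any bracket-generating condition on $\hat\scrF|_U$: the spanning at the point comes from Sussmann's orbit theorem applied to the transitive action (pushforwards $\phi_{j,*}Y_j$), not from iterated Lie brackets of elements of $\scrF$.
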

Before considering the proof of Lemma~\ref{lemma:nbh}, we will show how it implies our main results on controllability, using an approach mirroring \cite{Agrachev}.

\begin{proof}[Proof of Theorem~\ref{th:control}]
For any $x \in M$, define $U_x$ as the corresponding neighbourhood with the desired properties. Consider an open cover $\bigcup_{x} U_x$, define a finite subcover $U_1 \cup \cdots \cup U_l$ and introduce a partition of unity $\lambda_1,\dots, \lambda_l$ subordinate to this subcover. For $j =1,2 \dots, l$, let $\mathscr{U}_j$ be the corresponding open neighbourhoods, such that $\mathscr{U}_j$ is an open set in $C^\infty(U_x, M)$.

Now let $\mathscr{O}$ be an open, connected neighbourhood in $\Diff(M)$ of the identity. Define $\mathscr{O}_j$ as the subset of $\mathscr{O}$ with support in $U_j$. It then follows that $\scrO \subseteq \scrO_1 \circ \scrO_2 \circ \cdots \circ \scrO_l$. Namely, for any diffeomorphism  $\phi \in \scrO$, let $s \mapsto \varphi_s$ be a curve in $\scrO$ with $\varphi_0 = \id_M$ and $\varphi_1 = \phi$. Define now $\psi_j$ and $\phi_j$, $j=1,2, \dots, l$, by
$$\psi_j(x) = \varphi_{\lambda_1(x) + \cdots + \lambda_j(x)}(x),$$
$$\phi_1 = \psi_1, \qquad \phi_{j+1} = \psi_{j+1} \circ \psi_j^{-1}.$$
Then $\phi_{j} \in \scrO_j$ and $\phi = \phi_l \circ \cdots \circ \phi_1$. We can now define a new neighbourhood $\hat{\scrO}$ generated by $\scrO_j \cap \scrU_j$. Since $\hat{\scrO}$ is contained in $\hat{\scrG}$, the result follows.
\end{proof}

For the remainder of the section, we want to prove the result Lemma~\ref{lemma:nbh} for all different types of points in $M$. For all of these cases, we will need the following result from \cite[Prop~4.1]{Agrachev}.

\begin{lemma} \label{lem:Xn}
If $X_1,\dots, X_n$ are vector fields on $\mathbb{R}^n$ such that
$$\spn \{ X_1(0), \dots, X_n(0) \} = T_0 \mathbb{R}^n.$$
Then there exists a relatively compact neighbourhood $V$ of $0$ and a neighbourhood $\mathscr{V}$ of $C^\infty(V,\mathbb{R}^n;0)$, such that any $F \in \mathscr{V}$ can be written as
$$\psi = e^{f_1 X_1} \circ e^{f_2 X_2} \circ \cdots \circ e^{f_n X_n}|_V, \qquad f_j \in C^\infty(\mathbb{R}^n), \qquad f_j(0) = 0.$$
\end{lemma}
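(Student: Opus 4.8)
\textbf{Setup and strategy.} The statement is a quantitative, parametrized version of the classical fact that the flows of a spanning family of vector fields generate a neighborhood of the identity. I would approach it via the implicit/inverse function theorem in the Fréchet (or rather the tame/convenient) category of mapping spaces, exactly as is done in \cite[Prop.~4.1]{Agrachev}. Fix $X_1,\dots,X_n$ with $\spn\{X_1(0),\dots,X_n(0)\}=T_0\R^n$. The plan is to exhibit an explicit smooth map $\Phi$ from a neighborhood of $0$ in a suitable function-space $\prod_{j=1}^n C^\infty_c(W)$ (the $f_j$'s, normalized by $f_j(0)=0$) into $C^\infty(V,\R^n;0)$, given by $\Phi(f_1,\dots,f_n) = e^{f_1 X_1}\circ e^{f_2 X_2}\circ\cdots\circ e^{f_n X_n}|_V$, and then show that $\Phi$ admits a local right inverse near the identity by checking that its derivative at $0$ is a surjective linear map with a continuous (tame) right inverse.

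\textbf{Key steps.} First I would compute the first variation of $\Phi$ at $f=0$: since $e^{0\cdot X_j}=\id$, the Gateaux derivative in the direction $(g_1,\dots,g_n)$ is $D\Phi|_0(g_1,\dots,g_n) = \sum_{j=1}^n g_j X_j$, viewed as an element of the model space $\VF(V)$ vanishing at $0$ (or rather of $T_{\id}C^\infty(V,\R^n;0)$). This is the standard ``product of flows differentiates to a sum'' computation, using that the time-$1$ flow map is smooth and $\partial_t e^{tX}|_{t=0}=X$. Second, I would verify that the linear map $(g_1,\dots,g_n)\mapsto \sum_j g_j X_j$ is a split surjection onto a neighborhood-relevant subspace: because the $X_j(0)$ span $T_0\R^n$, by continuity they span $T_x\R^n$ for all $x$ in a small ball, so one can choose $C^\infty$ functions (partitions built from the ``dual frame'') expressing any given vector field $Y$ with $Y(0)=0$ as $\sum_j g_j X_j$ with $g_j(0)=0$ and $g_j$ depending continuously/smoothly and tamely on $Y$. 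Third, with this split surjectivity in hand, apply an inverse-function theorem valid in this setting — the Nash--Moser theorem, or more simply the convenient-calculus implicit function argument as used throughout \cite{schmeding_2022} and \cite{Agrachev} for these mapping spaces — to conclude that $\Phi$ maps a neighborhood of $0$ onto a neighborhood $\mathscr{V}$ of $\id$ in $C^\infty(V,\R^n;0)$, shrinking $V$ to a relatively compact $V$ as needed for the domains of the flows to be well-defined and the estimates uniform.

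\textbf{Main obstacle.} The genuinely delicate point is the inverse function theorem step: the spaces $C^\infty(V,\R^n)$ are Fréchet, not Banach, so the ordinary IFT does not apply, and one must either invoke a tame-Fréchet (Nash--Moser) framework — which requires checking tame estimates on $D\Phi$ and its right inverse — or, following \cite{Agrachev}, reduce to a finite-dimensional surjectivity statement by a clever finite-dimensional reduction (choosing the $f_j$ within a fixed finite-dimensional family and controlling the remainder). I would follow the latter route since it is cleaner: realize that it suffices to hit a neighborhood of $\id$ in the (finite-dimensional-target) space of $n$-jets / of diffeomorphisms determined up to higher order, and bootstrap. A secondary but routine technicality is keeping track of domains: the composition $e^{f_1 X_1}\circ\cdots\circ e^{f_n X_n}$ is only defined on the subset of $V$ that does not escape under the successive flows, so one shrinks $V$ and the $C^\infty$-size of the $f_j$ (equivalently shrinks $\mathscr{V}$) to guarantee everything is defined on all of $V$; the relative compactness of $V$ makes these estimates uniform. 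With these in place, the normalization $f_j(0)=0$ is automatic from the construction of the right inverse, completing the proof.
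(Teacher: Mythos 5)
First, a framing point: the paper does not prove Lemma~\ref{lem:Xn} at all --- it is imported verbatim from \cite[Prop.~4.1]{Agrachev} --- so your proposal is really competing with Agrachev--Caponigro's argument rather than with anything in this article. Your setup is fine as far as it goes: the map $\Phi(f_1,\dots,f_n)=e^{f_1X_1}\circ\cdots\circ e^{f_nX_n}$ is the right object, its derivative at $0$ is indeed $(g_1,\dots,g_n)\mapsto\sum_jg_jX_j$, and expanding a vector field in the frame $X_1,\dots,X_n$ near $0$ gives a continuous linear right inverse that preserves vanishing at $0$. The gap is that everything after this is precisely the content of the lemma, and neither of the two routes you offer closes it. The Nash--Moser route would require tame estimates for a right inverse of $D\Phi$ at \emph{all} points near the origin of the parameter space, not just at the identity; there the derivative is no longer $\sum_j g_jX_j$ but involves the differentials of the already-composed flows, and you give no indication of how to invert it tamely, nor that the relevant spaces (sections over a relatively compact set, with a pointwise constraint at $0$) fit the tame Fr\'echet framework. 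The alternative you sketch --- ``it suffices to hit a neighborhood of $\id$ in the space of $n$-jets and bootstrap'' --- is not an argument: the jets of a diffeomorphism at the single point $0$ come nowhere near determining it on $V$, and no bootstrap mechanism is described. So the plan correctly isolates the main obstacle and then does not overcome it.

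For comparison, the proof in \cite{Agrachev} avoids any infinite-dimensional inverse function theorem and is constructive, roughly as follows. One first factors a diffeomorphism $P$ close to the identity with $P(0)=0$ as $P=P_1\circ\cdots\circ P_n$, where each $P_j$ displaces points only along the integral curves of $X_j$; this uses only the finite-dimensional implicit function theorem with parameters, via the fact that $(t_1,\dots,t_n)\mapsto e^{t_1X_1}\circ\cdots\circ e^{t_nX_n}(x)$ is a local diffeomorphism onto a neighborhood of $x$ for every $x$ near $0$. One then shows separately that each such ``triangular'' piece is the time-one flow of $f_jX_j$ for a suitable $f_j$ with $f_j(0)=0$: in coordinates rectifying $X_j$ this is a one-dimensional flow-embedding problem solved smoothly in the transverse parameters, and this is where the genuinely delicate analysis sits. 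Both of these steps are absent from your plan; if you intend to prove the lemma rather than cite it, that is where the work needs to go.
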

Here and in the following we use $$C^\infty(V, \mathbb{R}^n;0) \coloneq \{f \in C^\infty (V,\mathbb{R}^n) \mid f(0)=0\}$$ to denote the space of smooth functions mapping zero to itself. Note that it is 
 closed linear subspace of $C^\infty (V,\mathbb{R}^n)$. We will also use Sussmann`s Orbit Theorem \cite{sussmann1973orbits}, see also \cite[Chapter~5]{AgSa04} and \cite[Theorem 1]{McK07}, stating that if $\Orb_x = \scrG(x)$ is the orbit of $\scrG$ through $x$, then
$$T_y \Orb_x = \{ \phi_* Y(y) \, : \, \phi \in \scrG, Y \in \scrF \}.$$

\subsection{A lemma on flows of vector fields}
We will use the following simple observation repeatedly below. Recall that $H^n = \{ x_n \geq 0\} \subseteq \R^n$.
\begin{lemma} \label{lemma:varphi}
Let $\hat Z$ be a vector field on $\R^n$ such that $dx_n(\hat Z)(0) \neq 0$. Then there exists a function $\varphi: \R^n \to \R$ of compact support such that
\begin{enumerate}[\rm (a)]
\item $\varphi|_V =1$ for some neighbourhood $V$ of $0$.
\item If $\hat g \in C^\infty(\mathbb{R}^n)$ is a function such that $e^{\hat g\varphi \hat Z}$ preserves $\partial H^n$, then on the support of $\varphi$ we can write $\hat g(x) = x_n g(x)$ for some $g \in C(\R^n)$.
\end{enumerate}
\end{lemma}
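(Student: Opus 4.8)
The plan is to produce $\varphi$ essentially as a cut-off, and then to recover the factor $x_n$ from the hypothesis that the flow preserves the boundary hyperplane $\partial H^n = \{x_n = 0\}$. First I would fix a relatively compact neighborhood $V$ of $0$ and a smooth bump function $\varphi \colon \R^n \to \R$ of compact support with $\varphi|_V \equiv 1$, chosen small enough that $dx_n(\hat Z) = \hat Z^n$ (the $x_n$-component of $\hat Z$) is nonvanishing on the support of $\varphi$; this is possible since $\hat Z^n(0) \neq 0$ and $\hat Z^n$ is continuous. This immediately gives property (1), and the real content is property (2).

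For property (2), suppose $\hat g \in C^\infty(\R^n)$ is such that the time-one flow $e^{\hat g \varphi \hat Z}$ preserves $\partial H^n$. The key observation is that $\partial H^n = \{x_n = 0\}$ being invariant under the flow forces the generating vector field $\hat g \varphi \hat Z$ to be tangent to $\{x_n = 0\}$ at every point where that set meets the support of $\varphi$ — otherwise the flow line through such a point would leave $\{x_n=0\}$ instantaneously. Concretely, at a point $x$ with $x_n = 0$ and $\varphi(x) \neq 0$, tangency means $dx_n(\hat g \varphi \hat Z)(x) = \hat g(x)\varphi(x)\hat Z^n(x) = 0$; since $\varphi(x)\hat Z^n(x) \neq 0$ on the support of $\varphi$, we conclude $\hat g(x) = 0$. (At points of $\{x_n = 0\}$ outside the support of $\varphi$, the assertion we want — that $\hat g$ factors through $x_n$ on $\supp\varphi$ — is vacuous near those points, so there is nothing to check.) Thus $\hat g$ vanishes on the hyperplane $\{x_n = 0\}$, at least on $\supp \varphi$.

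It then remains to upgrade ``$\hat g$ vanishes on $\{x_n = 0\}$'' to ``$\hat g(x) = x_n g(x)$ with $g$ continuous on $\supp\varphi$.'' This is a standard division argument: writing
\[
\hat g(x_1,\dots,x_{n-1},x_n) = \hat g(x_1,\dots,x_{n-1},x_n) - \hat g(x_1,\dots,x_{n-1},0) = \int_0^1 \frac{\partial \hat g}{\partial x_n}(x_1,\dots,x_{n-1},t x_n)\, dt \cdot x_n,
\]
we may set $g(x) = \int_0^1 \partial_{x_n}\hat g(x_1,\dots,x_{n-1},t x_n)\,dt$, which is smooth (indeed $C^\infty$, by differentiation under the integral sign) wherever $\hat g$ is. Hadamard's lemma in this form gives $\hat g = x_n g$ on all of $\R^n$ if $\hat g$ vanishes on the whole hyperplane; if one only knows vanishing on $\supp\varphi \cap \{x_n=0\}$, one first extends $\hat g|_{\supp\varphi}$ appropriately or, more cleanly, shrinks $\varphi$ at the outset so that $\supp\varphi$ is a product neighborhood $W \times (-\delta,\delta)$ in the $(x', x_n)$-coordinates, on which the vanishing of $\hat g$ at $x_n = 0$ does hold everywhere and the integral formula applies verbatim. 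I expect this last bookkeeping — making sure the support of $\varphi$ has product form so the Hadamard division is valid on all of $\supp\varphi$ rather than just near the origin — to be the only genuinely fiddly point; the tangency argument and the existence of the bump function are routine.
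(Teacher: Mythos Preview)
Your bump-function construction of $\varphi$ and the Hadamard division step are fine, but the central step --- concluding that $\hat g$ vanishes on $\{x_n=0\}\cap\supp\varphi$ --- has a genuine gap. You argue that if $\hat g\varphi\hat Z$ were not tangent to $\partial H^n$ at some boundary point, ``the flow line through such a point would leave $\{x_n=0\}$ instantaneously.'' That is true, but it does not finish the job: the hypothesis is only that the \emph{time-$1$ map} $e^{\hat g\varphi\hat Z}$ preserves $\partial H^n$, not that the full flow $e^{t\hat g\varphi\hat Z}$ does so for every $t$. A trajectory can leave $\partial H^n$ and return precisely at time~$1$. The paper singles out exactly this phenomenon in the paragraph preceding the proof, via the rotation field $-2\pi x_2\,\partial_1+2\pi x_1\,\partial_2$ on $\R^2$, whose time-$1$ flow is the identity even though the field is transverse to $\partial H^2$ at almost every boundary point. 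So ``leaves instantaneously'' is not a substitute for the argument that is actually needed.

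Your choice of $\varphi$ can be salvaged, but you must supply a monotonicity argument. Because every trajectory of $\hat g\varphi\hat Z$ is a reparametrized integral curve of $\hat Z$, and because (by uniqueness of ODE solutions) such a trajectory can never reach a zero of $\hat g\varphi$, the sign of $\hat g\varphi$ is constant along it; combined with $\hat Z^n>0$ throughout $\supp\varphi$, this makes $x_n$ strictly monotone along the trajectory, so no return to $\partial H^n$ is possible and $\hat g$ must vanish there. The paper secures the same monotonicity by a slightly different device: it takes $\supp\varphi$ inside an explicit flow box $V_1=\{e^{t\hat Z}(x'):x'\in V',\ |t|<\varepsilon\}$ for $\hat Z$, on which $\hat Z^n$ remains positive and the flow of $\varphi\hat Z$ is trapped, so the $n$-th coordinate moves strictly up for positive time and strictly down for negative time. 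Either construction works once the monotonicity is made explicit; the point is that this is the real content of the lemma, and your proposal skips over it.
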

In order to explain why we need this lemma, we note that for the case of $n=2$, we have
$$e^{- 2\pi x_2 \partial_1 + 2\pi x_1 \partial_2} = \id,$$
in spite of the fact that the vector field does not vanish on $\partial H^n$ outside of zero. For later purposes, we will need to exclude cases where the points can flow away from the boundary only to return later. So we use a cutoff function to prevent fast escape from a region with inward pointing derivative of the vector field $\hat Z$.

\begin{proof}
We will assume that $dx_n(\hat Z)(0) >0$, as the case where we have the opposite sign can be dealt with similarly. Define first
$$V_0 = \{ x  \in \R^n \, : \, dx_n(\hat Z)(x) >0 \},$$
and let $V'$ be a connected, relatively compact neighbourhood of $0$ in $\partial H^n \cong \R^{n-1}$ such that $\overline{V}'$ is contained in $V_0$. By compactness of $\overline{V}'$ we may pick $\varepsilon >0$ such that for all $x'  \in \overline{V}'$, and  $t \in (-\ve, \ve)$ we have 
$$e^{t\hat{Z}}(x') \in V_0 \qquad \text{for any $t \in (-\ve, \ve)$.}$$
Define for this choice of $\varepsilon$ the set
$$V_1 = \{ e^{t\hat Z}(x') \, : \, x' \in V', t \in(-\ve, \ve) \}.$$
Since the flows of $t\hat{Z}$ are transversal to the boundary for all points in $\partial H^n \cap V_0$, the set $V_1$ is an open neighbourhood of $V'\times \{0\}$.
Hence we can pick $\varphi \in C^\infty (\R^{n})$ as a function of compact support which satisfies the following properties
\begin{enumerate}[$\bullet$]
\item there is a 
$0$-neighbourhood $U$ such that $\varphi|_U \equiv 1$ and $\overline{U} \subseteq V_{1}$.
\item $\varphi >0$ on $V_{1}$,
\item $\varphi \equiv 0$ outside of $V_{1}$ and on $\partial V_1$.
\end{enumerate}
We note that the flow $\Phi_t \coloneq e^{t\varphi Z}$ satisfies
$$\Phi_t (V_1)  \subseteq V_1, \qquad \Phi_t|_{\R^{n}\setminus V_1} = \id_{\R^{n} \setminus V_1}.$$
Let us write now $(y',y_n) = e^{r \hat Z}(x')$ for $y' \in \R^{n-1}$, $y_n \in \R$ and $x' \in V', r \in (-\varepsilon, \varepsilon)$. Since $\hat Z$ is positive on $V_1$ in the $n$th coordinate direction, we deduce $\text{pr}_n(e^{t\varphi \hat Z}((y',y_n))) > y_n$ whenever $t >0$ and $(y',y_n) \in V_1$. Similarly we have a decrease in the $n$-th coordinate when $t$ is negative. It follows that if $e^{\hat g\varphi \hat Z}$ preserves $\partial H^n$, $\hat g$ has to vanish on $\partial H^n \cap V_1$, which gives the result.
\end{proof}

\subsection{Localisation lemma for an interior point} \label{sec:InteriorPoint}
For an interior point, we can use the results in \cite{Agrachev}. Using a local chart, we may assume that $x =0$ in $\mathbb{R}^n$. By the Orbit Theorem, we can find $\phi_j \in \scrG$, $Y_j \in \scrF$ such that $X_j = \phi_{j,*} Y_j$, $j=1,\dots,k$ spans $T_0 \mathbb{R}^n$ at $0$. Using Lemma~\ref{lem:Xn}, this means that we can find a neighbourhood $\mathscr{U}_{00}$ in $C^\infty(U,M;0)$ around the identity such that
\begin{align*}
\phi & = e^{f_1 X_1} \circ \cdots \circ e^{f_{n} X_{n}}  \\
& = \phi_1 \circ e^{(f_1 \circ \phi_1) Y_1} \phi_1^{-1} \circ \cdots \circ \phi_{n} \circ e^{(f_{n} \circ \phi_{n}) Y_{n}} \circ  \phi_{n}^{-1} |_U \in \hat{\scrF}|_U.
\end{align*}
Next, let $Q(s_1, \dots, s_n) = e^{s_1 X_1} \circ \cdots \circ e^{s_n X_n}$, and let $V$ be a neighbourhood of $s=0$ such that $s\mapsto Q(s_1, \dots, s_n)(0)$ is a diffeomorphism onto $Q(V)(0)$ with inverse $q$. Now define a neighbourhood $\mathscr{U}_0$ of the identity in $C^\infty(U,M)$ consisting of diffeomorphism satisfying
$$\phi(0) = y, \qquad Q(q(y))^{-1} \circ \phi \in \mathscr{U}_{00}.$$
The result follows

\subsection{Localisation lemma for a boundary point} \label{LLBoundary} 
We will now show how Section~\ref{sec:InteriorPoint} can be modified to work on a boundary point $x \in \partial M$. It is actually sufficient to prove it at one point for each boundary component. To give some more details, let $x \in \partial M$ be a boundary point with neighbourhood $U$ and neighbourhood of the identity $\scrU$ in $C^\infty_{str}(U,M)$ contained in $\hat \scrG|_{U}$. Let $y \in \partial M$ be a boundary point in the same connected component and let $\psi$ be a map in $\scrG$ with $\psi(x) = y$, which exists by (I). Define $U_y = \psi(U)$ and note that $\psi \circ \scrG|_{U} \circ \psi^{-1} = \scrG|_{U_y}$ and so $\scrU_y =\psi \circ \scrU \circ \psi^{-1}$ is a neighbourhood of $\id_{U_y}$ contained in $\scrG|_{U_y}$. 

As a consequence, we only need to prove the statement for a boundary point~$x$ where we have a vector field $Z$ vanishing on the boundary close to $x$ such that $\langle \nabla_\nu Z, \nu \rangle_g(x) \neq 0$. By using a local chart in a neighbourhood $U_0$ of $x$, we can reduce our considerations to the case when~$U_0$ is a neighbourhood of~$x=0$ in~$H^n = \{ x_n \geq 0\} \subseteq \mathbb{R}^n$, and with $Z|_{U_0 \cap \partial H^n} =0$. By Section~\ref{sec:NotGeometric}, we can also change to the geometry of $H^n$, meaning that $Z$ satisfies $dx^n(\partial_n Z) \neq 0$.

Let $U^+_0$ be a connected neighbourhood of $0$ in $\R^n$ which has $U^+_0 \cap H^n = U_0$. Using a bump function with support on $U^+_0$ which is constant 1 close to $0 \in \R^n$ and Seeley's extension argument in \cite{Seeley64} (see \cite[Theorem 3.1]{Han21} for a more general version working directly on open subsets of half-spaces), we can consider $C^\infty(U_0,H^n)$ as a subset of $C^\infty(U^+_0, \R^n)$. By the same argument, we can extend our vector fields $\scrF|_{U_0}$ to $U^+_0$ which we will denote by $\scrF^+$.

Let $\Orb_0$ be the orbit of $\scrF^+$ through $0$. By (I) and the Orbit Theorem
$$T_0 \Orb_0 = T_0 (\partial H^n),$$
so we can find elements $\phi_j \in \Gr(\scrF^+)$, $Y_j \in \scrF^+$, $j = 1, \dots, n-1$, such that $X_j = \phi_{j,*} Y_j$ satisfies
$$\spn\{ X_1(0), \dots, X_{n-1}(0) \} = T_0 (\partial H^n).$$
We remark that since the flows of vector fields in $\scrF^+$ never pass through the hyperplane $x_n = 0$, each restriction of an element in $\Gr(\scrF^+)$ to $x_n \geq 0$ will be in $\Gr(\scrF|_{U_0})$.
Finally, since $Z$ vanishes on the boundary, we can write $Z=x_n\hat{Z}$ for some vector field $\hat{Z}$ satisfying $dx^n(\hat Z)(0)$ and hence being non-vanishing close to zero. We can thus consider the vector field $\varphi \hat Z$, where $\varphi$ is as in Lemma~\ref{lemma:varphi}, and that satisfies
$$\spn\{ X_1(0), \dots, X_{n-1}(0), \varphi(0) \hat Z(0) \} = T_0 \R^n.$$
By Lemma~\ref{lem:Xn}, there is a neighbourhood $U_1^+$ of $0$ in $\R^n$ and a neighbourhood $\scrU_{1}$ in $C^\infty(U_1^+,\R^n;0)$ such that any element in $\phi \in \scrU_1$ can be written as 
\begin{align*}
\phi & = e^{f_1 X_1} \circ \cdots \circ e^{f_{n-1} X_{n-1}} \circ e^{\hat g \varphi \hat Z} |_{U_1^+} \\
& = \phi_1 \circ e^{(f_1 \circ \phi_1) Y_1} \circ \phi_1^{-1} \circ \cdots \circ \phi_{n-1} \circ e^{(f_{n-1} \circ \phi_{n-1}) Y_{n-1}} \circ  \phi_{n-1}^{-1} \circ e^{\hat g \varphi \hat Z} |_{U_1^+}
\end{align*}
If $U_1 = U_1^+ \cap H^n$ and $\phi \in C^\infty_{str}(U_1,H^n;0) \cap \scrU_1$, then since $\phi$ preserves the boundary we must have that $e^{\hat g \varphi \hat Z}$ preserves the boundary. We deduce that $\hat g \varphi \hat Z =x_n g \varphi \hat Z = g \varphi Z$, giving us
\begin{align*}
\phi |_{U_1}
& = \phi_1 \circ e^{(f_1 \circ \phi_1) Y_1} \circ \phi_j^{-1} \circ \cdots  \circ \phi_{n-1} \circ e^{(f_{n-1} \circ \phi_{n-1}) Y_{n-1}} \circ  \phi_{n-1}^{-1} \circ e^{g \varphi Z}|_{U_1} \in \hat{\mathscr{G}}|_{U_1}
\end{align*}
We next observe that if
$$Q(s_1, \dots, s_{n-1} ) = e^{s_1 b X_1} \circ \cdots \circ e^{s_{n-1} b X_{n-1}},$$
then $s \mapsto Q(s_1, \dots, s_{n-1})(0)$
is a local diffeomorphism into $\partial H^n$. Let $V$ be a neighbourhood of $0$ such that $V \mapsto Q(V)(0)$, $s \mapsto Q(s)(0)$ is a diffeomorphism. Write its inverse as $q: Q(V)(0) \to V$. Define a neighbourhood $\mathscr{U}_0$ in $\Diff(U)$, consisting of $\phi$ such that
$$\phi(0) =y  \in Q(V)(0), 
\qquad Q(q(y))^{-1} \circ \phi \in \mathscr{U}_{1}.$$
This completes the proof.

\subsection{Proof of Corollary \ref{cor:exp_gen}}
The first statement is a result of Theorem~\ref{th:control}. For the second statement, we start with $\scrF = \hat{\scrF} = \VF(M)$. We complete the proof as above, but for the neighbourhood of the boundary points as in Section~\ref{LLBoundary}, after localising to a neighbourhood of $H^n$ through a choice of local coordinate system $U$, we may choose $X_1 = \partial_{x_1}$, $\dots$, $X_{n-1} = \partial_{x_{n-1}}$ and $Z = x_n \partial_{x_n} =x_n \hat Z$. We know that for elements in the support of $U$, we have
$$\psi = e^{\hat f_1 \partial_{x_1}} \circ \cdots \circ e^{\hat f_{n-1} \partial_{x_{n-1}}} \circ e^{\hat g\partial_{x_{n-1}}}|_U$$
If $\psi$ equals $\id$ on the boundary close to zero, we need that $\hat f_j = x_n f_{j+1}$, and $\hat g = x_n g$, giving us
$$\psi = e^{x_n f_1 \partial_{x_1}} \circ \cdots \circ e^{x_n f_{n-1} \partial_{x_{n-1}}} \circ e^{ g \partial_{x_n}} \in \Gr(\VF^{\partial,\id}(U)),$$
which gives the result.

\section{Examples} \label{sec:Examples}
\subsection{A counter-example}\label{sect:counterex}
We will give a counter example showing that it is not sufficient to just require that $\scrG$ acts transitively on the interior and on connected components of the boundary.
Our example will be $M = [0,1]$ with coordinate~$x$, but taking products with other manifolds, we can obtain a counter-example in any dimension. Let $\scrF = \spn \{ Y\}$, where $Y$ is the vector field vanishing on the boundary satisfying
$$Y(x) = \sinh(x(x-1)) \partial_x =: b(x) \partial_x,$$
Observe that $\scrG$ then acts transitively on the interior, while preserving the two boundary points. Consider any diffeomorphism
$\varphi_t = e^{tf Y} =\varphi_t = e^{tfb \partial_x}$.
We then observe that
$$\left.\frac{d^2}{dx dt} \varphi_t \right|_{x=0} =0,$$ meaning that $\frac{d}{dx} \varphi_t(0) = 1$ for all such functions. On the other hand, if we consider $X = \sin(\pi x)\partial_x$ then
\begin{equation} \label{FFlow} \psi_t(x) = e^{tX}(x) = \frac{2}{\pi} \tan^{-1}\left(e^{\pi t} \tan\left(\frac{\pi x}{2} \right)\right) \end{equation}
has derivative
$$\frac{d}{dx} \psi_t(x) = \frac{e^{\pi t}}{1 + (e^{2\pi t}-1) \sin^2 \frac{\pi x}{2}},$$
and in particular $\frac{d}{dt} \psi_t(0) = e^{\pi t} >1$ for $t >0$. It follows that $\psi_t$ for $t >0$, can not be written as $e^{fY}$ for any function $f$ on $M$.

\subsection{Sub-Riemannian manifolds with boundary}
For a general manifold $N$ without boundary, we say that a subbundle $E \subseteq TN$ is \emph{bracket-generating} if vector fields with values in $E$ along with their iterated Lie brackets span $TN$. We use the same terminology if $E$ is a \emph{rank-varying subbundle} in the sense of \cite[Definition~1]{agrachev2010two}, where $E$ is allowed to have different rank depending on the point.

Let $M$ be a compact submanifold with smooth boundary. Let $E \subseteq TN$ be a subbundle of the tangent bunlde, and define a collection of vector fields
$$\scrF = \hat \scrF =\{ X \in \Gamma(E) \, : \, X(x) \in T_x \partial M \text{ for any $x \in M$}\}.$$
We then have the following result.
\begin{proposition}
Write $\rank E = k$ and define $E' = E\cap T\partial M$. Assume that
\begin{enumerate}[\rm (i)]
\item $E|_{\intOP M}$ is bracket-generating on $\intOP M$.
\item $E'$ is bracket-generating on $\partial M$.
\item For each boundary component, there is one point $x \in \partial M$ such that $E'_x$ has rank $k-1$.
\end{enumerate}
Then $\scrG = \Gr(\scrF) = \Gr(\hat \scrF)$ coincides with $\Diff(M)$.
\end{proposition}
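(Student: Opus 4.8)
The plan is to deduce the proposition from Theorem~\ref{th:control} by checking its two hypotheses (I) and (II) for the family $\scrF$. Note that the equality $\scrG = \Gr(\scrF) = \Gr(\hat\scrF)$ is automatic here, since the set $\{X\in\Gamma(E|_M) : X(x)\in T_x\partial M \text{ at boundary points}\}$ is already a $C^\infty(M)$-module (if $X$ is tangent to $\partial M$ and $f\in C^\infty(M)$, then $f(x)X(x)\in T_x\partial M$), so $\scrF=\hat\scrF$. Observe also at the outset that every element of $\scrG$ preserves $\partial M$ and hence $\intOP M$, because the vector fields in $\scrF$ are tangent to $\partial M$ and thus have flows fixing $\partial M$; consequently the $\scrG$-orbit of an interior point lies in $\intOP M$ and the orbit of a boundary point lies in its connected component of $\partial M$.

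For hypothesis (I) I would invoke Sussmann's orbit theorem. For an interior point $x\in\intOP M$: any local section of $E$ defined near $x$ may be multiplied by a bump function supported in $\intOP M$ and equal to $1$ near $x$, producing an element of $\scrF$; since $E|_{\intOP M}$ is bracket-generating at every point of $\intOP M$ by (a), the evaluations at $x$ of the Lie algebra generated by $\scrF$ already span $T_x M$, so every orbit through an interior point is open in $\intOP M$. These orbits partition $\intOP M$, are open, and are path-connected (concatenate flow segments), hence each is a connected component of $\intOP M$; thus $\scrG$ is transitive on $\intOP M$ (we may assume $M$ connected, as in Theorem~\ref{th:control}). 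For a boundary point $x\in\partial M$: using the collar $h$ one extends any local section of $E'=E\cap T\partial M$ near $x$ to a section of $E|_M$ tangent to $\partial M$ (extend it to be independent of the collar parameter, then cut off), multiply by a bump function, and obtain an element of $\scrF$ restricting on $\partial M$ to the chosen section of $E'$; by (b) these restrictions bracket-generate $T_x\partial M$, so $T_x\Orb_x = T_x\partial M$, the orbit is open in its boundary component, and hence equals it. This establishes (I).

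For hypothesis (II) I would use (c). Since $E_x$ is $k$-dimensional and $T_x\partial M$ has codimension one, $\dim(E_x\cap T_x\partial M)$ is always $k-1$ or $k$, and it equals $k$ exactly on the closed set where $E|_{\partial M}\subseteq T\partial M$; hence $\{x\in\partial M : \rank E'_x = k-1\}$ is open, and by (c) it is dense. Let $x$ be a point of this dense open set. Then $E_x\not\subseteq T_x\partial M$, so pick $v\in E_x$ with $v\notin T_x\partial M$ and extend $v$ to a local section $\xi$ of $E$ near $x$ in $N$. Let $t$ be a boundary-defining function near $x$ (so $t\geq 0$, $\{t=0\}=\partial M$ locally, $dt\neq 0$ on $\partial M$, e.g.\ the collar parameter), choose a bump function $\lambda$ supported near $x$ with $\lambda\equiv 1$ on a neighborhood $V$ of $x$, and set $Z\coloneq \lambda\, t\, \xi|_M$, extended by zero. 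Then $Z\in\Gamma(E|_M)$ because $\scrF$ is a module, $Z$ vanishes on $\partial M$, in particular $Z|_{V\cap\partial M}=0$, so $Z\in\scrF=\hat\scrF$. Writing $\rho=\lambda t$, we have $\nabla_\nu Z = (\nu\rho)\,\xi + \rho\,\nabla_\nu\xi$, and at $x$ the second term vanishes (since $\rho(x)=0$) while $\nu\rho(x)=dt(\nu)(x)$, so $\langle\nabla_\nu Z,\nu\rangle(x) = dt(\nu)(x)\,\langle\xi(x),\nu(x)\rangle\neq 0$ because $dt(\nu)(x)\neq 0$ and $\xi(x)=v\notin T_x\partial M=\nu(x)^\perp$. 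This verifies (II) on a dense subset of $\partial M$, and Theorem~\ref{th:control} then yields $\hat\scrG = \Diff(M)$.

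The main obstacle I expect is hypothesis (I), specifically the bookkeeping that the boundary-tangency constraint defining $\scrF$ does not destroy the bracket-generating property inherited from $E$: near an interior point one must see that all local sections of $E$ are realized by elements of $\scrF$ (immediate, via cutoffs supported in $\intOP M$), and near a boundary point one must extend local sections of $E'$ to sections of $E|_M$ tangent to $\partial M$ (this uses the collar together with a small transversality/extension argument). Everything else — the orbit-theorem input, the openness-plus-connectedness deduction that orbits are components, the elementary semicontinuity argument for the rank of $E'$, and the one-line computation of $\langle\nabla_\nu Z,\nu\rangle$ — is routine.
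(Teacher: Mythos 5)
Your proposal is correct and follows essentially the same route as the paper: condition (I) is verified via the orbit theorem (the paper simply invokes Chow--Rashevskii), and condition (II) by taking $Z=\phi\,\hat Z$ where $\phi$ vanishes on the boundary with nonvanishing normal derivative and $\hat Z$ is a local section of $E$ transverse to $\partial M$ at a point where $\rank E'=k-1$. Your version merely spells out the cutoff and extension bookkeeping that the paper leaves implicit.
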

We remark that $E'$ will be a rank-varying subbundle in general.
\begin{proof}
For convenience, we introduce an auxillary Riemannian metric $g$ on $M$, and let $\nu$ be the corresponding outward normal vector. By the Chow-Rashevskii theorem \cite{Cho39,Ras38}, which is a special case of the Orbit theorem, we have that $\scrG$ acts transitive on $\intOP M$ and on $\partial M$, meaning that condition (I) is satisfied. Next, around any boundary point $x \in \partial M$ such that $\rank E_x = k-1$, we choose a neighbourhood sufficiently small so that there exists an orthonormal basis $X_1, \dots, X_{k-1}, \hat Z$ of $E$ such that $X_1, \dots, X_{k-1}$ spans $E'$ at the boundary. By possibly shrinking $U$, we can assume that there is a function $\phi$ such that $\phi|_{U \cap \partial M} =0$, $X_j\phi|_{U \cap \partial M} =0$ and $\nu \phi|_{U \cap \partial M} >0$. We can then define $Z = \phi \hat Z$ which satisfies (II) at $x$.
\end{proof}

\begin{example}{(Euclidean ball in the Heisenberg group)}
Consider $\mathbb{R}^{2n+1}$ with the standard Euclidean norm and write the coordinates as $(x,y,z) \in \mathbb{R}^{2n+1}$, $x,y \in \mathbb{R}^n$, $z \in \mathbb{R}$. We consider the closed Euclidean unit ball $M = \{ \|(x,y,z)\| = 1 \, ; \, (x,y,z) \in \mathbb{R}^{2n+1}\}$. Introduce vector fields on $\mathbb{R}^{2n+1}$ by
$$X_j = \partial_{x_j} - \frac{y_j}{2} \partial_{z}, \qquad Y_j = \partial_{y_j} + \frac{x_j}{2} \partial_{z}, \qquad j=1, \dots, n.$$
For smooth functions $a,b:\mathbb{R}^{2n+1} \to \mathbb{R}^n$, we define
$$aX + b Y = \sum_{j=1}^n (a_j X_j + b_j Y_j).$$
Then
\begin{align*}[a X+ b Y , \tilde a X + \tilde b Y] & =((a X + b Y) \tilde b - (\tilde a X + \tilde bY ) b) Y \\
& \qquad + ((a X + b Y) \tilde a - (\tilde a X + \tilde bY ) a) X \\
& \qquad + (\langle a, \tilde b \rangle - \langle b, \tilde a \rangle ) \partial_z.\end{align*}
Hence $E = \spn\{ X_1, \dots, X_n, Y_1, \dots, Y_n \}$ is bracket-generating everywhere, including in $\intOP M$. For $E' = E \cap T\partial M$, write $X_j' = X_j|_{\partial M}$ and $Y_j' = Y_j|_{\partial M}$. Then for $(x,y,z) \in \partial M$,
\begin{align*} E'_{x,y,z} &= \spn \left\{a X' + b Y' \, : \, \langle a, x \rangle + \langle b, y \rangle + \frac{z}{2} (-\langle y, a \rangle + \langle x,b \rangle ) =0 \right\} \\
& = \spn \left\{ a X' + b Y' \, : \, \langle a , e_1(x,y,z) \rangle + \langle b , e_2(x,y,z) \rangle =0 \right\}\end{align*}
where $|x|^2 + |y|^2 \leq 1$, $z = \pm \sqrt{1-|x|^2 - |y|^2}$ and
$$e_1(x,y,z) = x- \frac{zy}{2} , \qquad e_2(x,y,z) = y + \frac{zx}{2} .$$
Note that $e_1$ and $e_2$ vanish simultaneously if and only if $x= y =0$. Hence $\rank E' = 2n-1$ outside of the point $(0,0, \pm 1)$. Condition (iii) is hence satisfied. At the north and south pole, we have $E_{0,0,\pm 1} = T_{0,0,\pm 1} \partial M$, but for orther proints, the missing direction in $E$ is spanned by
$$Z' = \partial_z - \frac{z}{|e_1|^2 + |e_2|^2} (e_1 X+ e_2 Y).$$
We need to show that we can generate this missing direction outside of the points $(0,0,\pm 1)$ in order to have the bracket-generating condition, in other words, we have to show that $E'' =E|_{\partial M \setminus (0,0,\pm 1)}$ is bracket-generating. We consider three cases below.
\begin{enumerate}[(a)]
\item If $n=1$, then $E''$ has rank 1 and is integrable, meaning that there is a one-dimensional foliation of $\partial M \setminus \{ 0,0,\pm 1)\}$ such that $\scrG$ will act transitively on its leafs. Hence, $\scrG$ will not act transitively on the boundary.
\item If $n \geq 3$, then we can find a unit vector $a: \mathbb{R}^{2n+1} \to \mathbb{R}^n$ such that it is orthogonal to $e_1$ and $e_2$. Since now $aX'$ and $aY'$ are sections of $E''$, and
$$[aX',aY'] = Z' \, \mod E'',$$
we have the bracket-generating condition. It follows that $\scrG$ acts transitively on the boundary.
\item If $n=2$, we will need to find $a,b:\mathbb{R}^{5} \to \mathbb{R}^2$ such that on $\partial M$,
$$\langle a, e_1 \rangle =0, \qquad \langle b, e_2 \rangle =0, \qquad \langle a,b \rangle =1.$$
which is equivalent to $aX'$ and $bY'$ being tangent tangent to $\partial M$ as well as satisfying $[aX,bY] = Z' \mod E'$. The only obstruction for being able to complete this choice would be if $e_1$ and $e_2$ where both non-zero and orthogonal at a point. However, if we choose $x$ and $y$ non-zero, orthogonal and of equal length, then $e_1(x,y)$ and $e_2(x,y)$ are non-zero and orthogonal. Hence, we cannot make $E''$ bracket-generating in this case.
\end{enumerate}
\end{example}

\printbibliography

\end{document}